\numberwithin{equation}{section}
\newcommand{\I}{\mathrm{i}}
\newcommand{\D}{\mathrm{d}}
\newcommand{\wh}{\widehat}
\newcommand{\lb}{\left(}
\newcommand{\vp}{\varphi}
\newcommand{\rb}{\right)}
\newcommand{\PD}{\partial}
\newcommand{\wt}{\widetilde}
\newcommand{\Tl}{\tilde}
\newcommand{\Rb}{\mathbb{R}}
\newcommand{\Sb}{\mathbb{S}}
\newcommand{\Beq}{\begin{equation}}
	\newcommand{\Eeq}{\end{equation}}
\newcommand{\beq}{\begin{equation*}}
	\newcommand{\eeq}{\end{equation*}}
\newcommand{\bal}{\begin{align}}
	\newcommand{\eal}{\end{align}}
\newcommand{\n}{\nabla}
\newcommand{\A}{\alpha}
\newcommand{\B}{\beta}
\newcommand{\bp}{\begin{prob}}
	\newcommand{\ep}{\end{prob}}
\newcommand{\bpr}{\begin{proof}}
	\newcommand{\epr}{\end{proof}}
\newcommand{\bel}[1]{\begin{equation}\label{#1}}
	\newcommand{\ee}{\end{equation}}
\newtheorem{theorem}{Theorem}[section]
\newtheorem{lemma}[theorem]{Lemma}
\theoremstyle{definition}
\title[Light ray transform of symmetric 2-tensor fields]{A uniqueness result for light ray transform on symmetric 2-tensor fields}
\author[Krishnan, Senapati and Vashisth]{Venkateswaran P Krishnan$^{\dagger}$, Soumen Senapati$^{\ddagger}$ and Manmohan Vashisth$^{*}$}
\address{$^{\dagger}$ TIFR Centre for Applicable Mathematics, Bangalore 560065, India. 
	\newline\indent E-mail:{\tt \ vkrishnan@tifrbng.res.in}}
\address{$^{\ddagger}$ TIFR Centre for Applicable Mathematics, Bangalore 560065, India. 
	\newline\indent E-mail:{\tt \ soumen@tifrbng.res.in}}
\address{$^{*}$ Beijing Computational Science Research Center, Beijing 100193, China.
	\newline
	\indent E-mail:{\tt\  mvashisth@csrc.ac.cn}}
\begin{document}
	
	\begin{abstract}
		We study light ray transform of symmetric 2-tensor fields defined
		on a bounded time-space domain in $\Rb^{1+n}$  for $n\geq 3$. We prove a uniqueness result for such light ray transforms. More precisely, we characterize the kernel of the light ray transform vanishing near a fixed direction at each point in the time-space domain. 
	\end{abstract}
	\maketitle
\vspace{2mm}
\ \ \ \ \ \ 	 \textbf{Keywords:} Light ray transform, uniqueness, tensor fields, Minkowski space, Helmholtz decomposition, elliptic system\\

\ \ \ \ \ \  \textbf{Mathematics subject classification 2010:} 53C65, 45Q05, 35J57.
	
	\section{Introduction and statement of the main results}
	Let $S^{2}\Rb^{1+n}$ be the complex vector space of symmetric tensor fields of rank $2$ in $\Rb^{1+n}$. Let $\Omega$ be a bounded domain in $\Rb^{1+n}$ with $C^{\infty}$ boundary and  $C^{\infty}(\overline{\Omega}; S^{2} \Rb^{1+n})$ be the space of $S^{2}\Rb^{1+n}$-valued $C^{\infty}$ smooth symmetric 2-tensor fields on $\overline{\Omega}$.  We represent points in $\Omega$ by $(t,x)$.  
	Any $F\in C^{\infty}(\overline{\Omega}; S^{2} \Rb^{1+n})$ will be denoted by
	\[
	F(t,x)= \lb F_{ij}(t, x)\rb \mbox{ where } 0\leq i,j\leq n \mbox{ with } F_{ij}(t, x)=F_{ji}(t,x) \mbox{ and } F_{ij}(t, x)\in C^{\infty}(\overline{\Omega}).
	\]
	Note that we have used the $0$-index to denote the time component of a symmetric $2$-tensor field. Also note that a function $f\in C^{\infty}(\overline{\Omega})$ if it has a smooth extension to a slightly larger open set containing $\overline{\Omega}$.
	
	The light ray transform of $F\in C^{\infty}(\overline{\Omega}; S^{2}\Rb^{1+n})$ is defined as follows. Consider a point $(t,x)\in \overline{\Omega}$  and  fix a direction $\theta\in \Sb^{n-1}$. The light ray transform $L$ of $F$ is the usual ray transform of $F$ through the point $(t,x)$ in the direction $\wt{\theta}=(1,\theta)$. That is,  
	\begin{align}\label{definition of L}
		LF(t,x,\Tl{\theta})=\int\limits_{\mathbb{R}}\sum\limits_{i,j=0}^{n}\tilde{\theta}^{i}\tilde{\theta}^{j}F_{ij}(t+s,x+s\theta)\, \D s.
	\end{align}

	We have assumed the Einstein summation convention and from now on, with repeating indices, this will be assumed, with the index varying from $0$ to $n$. We also note that extending $F$ to be $0$ outside $\Omega$, the definition of the light ray transform $L$ can be extended to points $(t,x)\in \Rb^{1+n}$ and any $\Tl{\theta}$ as defined above.  
	This will be assumed without comment from now on.
	
	In this work, we address the question of characterizing the tensor fields $F\in C^{\infty}(\overline{\Omega}; S^{2}\Rb^{1+n})$ such that $LF(t,x,\Tl{\theta})=0$ for all $(t,x)\in\Rb^{1+n}$ and all $\Tl{\theta}=(1,\theta)$ with $\theta\in\mathbb{S}^{n-1}$ near some fixed $\theta_{0}\in\mathbb{S}^{n-1}$.  
	
	Light ray transforms in Euclidean and manifold settings have been studied in several recent works; see \cite{feizmohammadi2019light,lassas2019light,LOSU, Siamak_Support_theorem_2017,Stefanov_Support_Theorem_Lorentzian_Manifold_2017,YW}. Most of these works analyze light ray transform from the view point of microlocal analysis. Light ray transforms arise in the study of inverse problems for hyperbolic PDEs with time-dependent coefficients as well; see references \cite{IBA,Eskin_IHP_time-dependent_2007,AJYL,Ali_Kian,Kian_Damping_partial_data_2016,Kian_Unique_determination_potential_partial_data_2017,Krishnan_2019,Ramm_Rakesh_Property_C_1991,Ramm_Sjostrand_IP_wave_equation_potential_1991,Salazar_time-dependent_first_order_perturbation_2013,
		Stefanov1987,Stefanov_Inverse_scattering_potential_time_dependent_1989, Stefanov-Yang}.

	To the best of the authors' knowledge, an exact  description of the kernel of the light ray transform on symmetric 2-tensor fields has not been precisely studied, and this is the main goal of the paper. In this work, we use Fourier transform techniques to prove the uniqueness result in the Minkowski setting. We should mention that the recent paper \cite{feizmohammadi2019light} also deals with uniqueness result for light ray transforms on symmetric tensor fields and a uniqueness result similar to ours is proven in the more general setting of Lorentzian manifolds. Our work was done independently, and the techniques employed here are different from theirs. Specifically, their uniqueness result for the light ray transform on certain Lorentzian manifolds relies on the uniqueness result for the corresponding geodesic ray transform on the base space; see \cite[Theorem 2]{feizmohammadi2019light}. However, we work directly with the light ray transform, albeit in the Minkowski setting. Another distinction from the work of \cite{feizmohammadi2019light} is that our uniqueness result only assumes knowledge of the light ray transform in the neighborhood of a fixed direction. In other words, ours is a partial data result. The uniqueness result of \cite{feizmohammadi2019light}, requires knowledge of the full light ray transform, even in the setting of Minkowski space.

	We now state the main results.
ults as well as in proofs below we use the following notation. By $\delta$, we mean the Euclidean divergence and $\mbox{trace}$ will refer to the Euclidean trace.

	In other words, for a symmetric $2$-tensor field $F=(F_{ij})_{0\leq i,j\leq n}$: 
	\begin{align}
	& \label{divergence defn}\lb \delta F\rb_{i} = \frac{\PD  F_{j0}}{\PD t}+\sum\limits_{j=1}^{n}\frac{\PD F_{ij}}{\PD x_{i}} = \PD_{i} F_{ij}. \\
	& \mbox{trace}(F)=\sum\limits_{i=0}^{n} F_{ii}.
	\end{align}
	
	In the last equality in \eqref{divergence defn}, we emphasize that the Einstein summation convention is assumed with the index varying between $0$ and $n$ and the $\frac{\PD}{\PD t}$ derivative is abbreviated as $\PD_{0}$ and the space derivatives $\frac{\PD}{\PD x_{i}}$ are denoted by $\PD_{i}$ for $1\leq i\leq n$.

	\begin{theorem}\label{Main Theorem for LRT of 2-tensor fields in R3}
		Let $F\in C^{\infty}(\overline{\Omega}; S^2 \Rb^{1+n})$ be such that $\delta F=0$ and $\mathrm{trace}(F)=0$.
		If for a fixed $\theta_{0}\in \Sb^{n-1}$, 
		\begin{align*}
			LF(t,x,\Tl{\theta})=0 \mbox{ for all } (t,x)\in\mathbb{R}^{1+n}\ \mbox{ and }\ \theta \mbox{ near } \theta_{0},
		\end{align*}
		then $F=0$.
	\end{theorem}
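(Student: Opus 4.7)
The plan is to pass to the Fourier side, where the hypothesis translates into an algebraic constraint on $\widehat{F}$, and then to combine this with the divergence-free and trace-free conditions through a linear-algebra argument that culminates in a non-degeneracy statement requiring $n\geq 3$.

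Since $F$ is smooth and compactly supported in $\overline{\Omega}$, the Paley--Wiener theorem guarantees that the Fourier transform $\widehat{F}(\tau,\xi)$ in the variables $(t,x)$ extends to an entire function on $\Cb^{1+n}$. A direct computation with the Fourier slice identity yields
\[
\widehat{LF}(\tau,\xi,\widetilde{\theta}) \;=\; 2\pi\,\delta(\tau+\xi\cdot\theta)\,\widetilde{\theta}^{i}\widetilde{\theta}^{j}\widehat{F}_{ij}(\tau,\xi),
\]
so the hypothesis becomes $\widetilde{\theta}^{i}\widetilde{\theta}^{j}\widehat{F}_{ij}(\tau,\xi)=0$ whenever $\tau+\xi\cdot\theta=0$ and $\theta$ lies in an open neighborhood $U\subset\Sb^{n-1}$ of $\theta_{0}$. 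For each fixed $\xi$, the function $\theta\mapsto \widetilde{\theta}^{i}\widetilde{\theta}^{j}\widehat{F}_{ij}(-\xi\cdot\theta,\xi)$ is real-analytic on the connected real-analytic manifold $\Sb^{n-1}$ and vanishes on $U$; hence it vanishes on all of $\Sb^{n-1}$. Thus the partial-data hypothesis can be upgraded to the vanishing of the quadratic form on $\widehat{F}$ along every direction $\theta$ with $|\theta|=1$.

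Next I would fix $(\tau,\xi)$ with $|\tau|<|\xi|$ and exploit the fact that the intersection $\{|\theta|^{2}=1\}\cap\{\xi\cdot\theta+\tau=0\}$ is a smooth $(n-2)$-sphere on which the degree-$2$ polynomial $P(\theta):=\widetilde{\theta}^{i}\widetilde{\theta}^{j}\widehat{F}_{ij}(\tau,\xi)$ vanishes identically. Since the ideal of polynomials vanishing on this transverse complete intersection is generated by $|\theta|^{2}-1$ and $\xi\cdot\theta+\tau$, there exist scalars $A=A(\tau,\xi)$ and $B_{0},B_{1},\ldots,B_{n}$ (also functions of $(\tau,\xi)$) with
\[
P(\theta) \;=\; A\,(|\theta|^{2}-1) \;+\; \Big(B_{0}+\sum_{k=1}^{n}B_{k}\theta_{k}\Big)(\xi\cdot\theta+\tau).
\]
Matching coefficients of monomials in $\theta$ expresses every component $\widehat{F}_{ij}(\tau,\xi)$ linearly in terms of these $n+2$ parameters.

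Finally I would substitute these formulas into the Fourier-side constraints $\tau\widehat{F}_{0j}+\sum_{i=1}^{n}\xi_{i}\widehat{F}_{ij}=0$ and $\widehat{F}_{00}+\sum_{i=1}^{n}\widehat{F}_{ii}=0$. The spatial divergence equations should force $B_{j}=c\,\xi_{j}$ for a scalar $c$ determined by $A$, $B_{0}$ and $(\tau,\xi)$, while the temporal divergence equation together with the trace equation should leave two linear relations between $A$ and $B_{0}$. Eliminating $B_{0}$, I expect to arrive at an identity of the form $A\cdot Q(\tau,\xi)=0$ with $Q$ a polynomial of the type $n\tau^{4}+(3n-2)\tau^{2}|\xi|^{2}+2(n-2)|\xi|^{4}$, which is strictly positive for $(\tau,\xi)\neq 0$ precisely when $n\geq 3$. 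This is the main obstacle: the calculation must be carried out with enough care to isolate a single scalar multiple of $A$ and verify the non-degeneracy of the multiplier, and this is also exactly where the hypothesis $n\geq 3$ enters. Once $A=0$ is in hand, the cascade $B_{0}=0$, $B_{j}=0$ yields $\widehat{F}(\tau,\xi)=0$ on the open set $\{|\tau|<|\xi|\}$, and entireness of $\widehat{F}$ then upgrades this to $\widehat{F}\equiv 0$, so $F=0$.
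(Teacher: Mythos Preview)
Your argument is correct and takes a genuinely different route from the paper's.

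\textbf{What the paper does.} The paper never leaves the partial-data regime: after the Fourier slice step it fixes $\zeta_{0}=(0,0,1,0,\ldots,0)$, chooses explicit one-parameter perturbations $\theta_{0}(a)$ of $\theta_{0}$ with $(1,\theta_{0}(a))\cdot\zeta_{0}=0$, differentiates the relation $\widetilde{\theta}^{i}\widetilde{\theta}^{j}\widehat{F}_{ij}(\zeta_{0})=0$ four times in $a$, and combines the resulting equations with the divergence-free and trace-free constraints to obtain a square linear system in the $\widehat{F}_{ij}(\zeta_{0})$ whose determinant is checked to be nonzero. A continuity argument then propagates this to a small cone of space-like $\zeta$, and Paley--Wiener finishes.

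\textbf{What you do differently.} You first invoke Paley--Wiener to make $\widehat{F}$ entire and use analyticity of $\theta\mapsto\widehat{F}(-\xi\cdot\theta,\xi)$ on $\Sb^{n-1}$ to upgrade the partial-$\theta$ hypothesis to the full-$\theta$ statement. You then exploit the ideal-theoretic fact that a degree-$2$ polynomial vanishing on the transverse intersection $\{|\theta|^{2}=1\}\cap\{\xi\cdot\theta+\tau=0\}$ lies in the ideal $(|\theta|^{2}-1,\ \xi\cdot\theta+\tau)$, which reduces the unknowns to the $n+2$ parameters $A,B_{0},\ldots,B_{n}$. Feeding these into the divergence and trace conditions indeed collapses to a $2\times 2$ system in $(A,B_{0})$ whose determinant is $(\tau^{2}+|\xi|^{2})\bigl(n\tau^{4}+(3n-2)\tau^{2}|\xi|^{2}+2(n-2)|\xi|^{4}\bigr)$, positive for $(\tau,\xi)\neq 0$ precisely when $n\geq 3$; your anticipated polynomial $Q$ is exactly right.

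\textbf{What each approach buys.} Your argument is cleaner and more structural: the ideal decomposition replaces the paper's repeated differentiation, and the non-degeneracy condition $n\geq 3$ is isolated in a single explicit polynomial rather than in the non-vanishing of a large determinant verified case by case. On the other hand, the paper's argument is more elementary (no real Nullstellensatz for the sphere is invoked) and genuinely stays with $\theta$ near $\theta_{0}$ throughout, so the ``partial data'' character is visible at every step rather than being absorbed by an analytic continuation that already uses compact support. Both routes terminate with the same Paley--Wiener step.
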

	\begin{theorem}\label{DT}
		Let $F\in C^{\infty}(\overline{\Omega}; S^2 \Rb^{1+n})$. Then there exists an $\wt{F}\in C^{\infty}(\overline{\Omega}; S^2 \Rb^{1+n})$ satisfying $\delta(\wt{F})=\mbox{trace}(\wt{F})=0$, a function $\lambda \in C^{\infty}(\overline{\Omega})$, and a vector field $v$ with components in $C^{\infty}(\overline{\Omega})$ satisfying $v|_{\PD \Omega}=0$ such that  $F$ can be decomposed as 
		\Beq\label{Decomposition formula}
		F= \wt{F} + \lambda g + \D v.
		\Eeq
	
		Here $g$ is the Minkowski metric with $(-1,1,1,\cdots,1)$ along the diagonal and $\D$ is the symmetrized derivative defined by 
		\[
		\lb \D v\rb_{ij}=\frac{1}{2}\lb \PD_{i}v_{j}+\PD_{j} v_{i}\rb.
		\]
	\end{theorem}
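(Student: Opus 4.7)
The plan is to determine $v$ and $\lambda$ by solving an elliptic Dirichlet problem for $v$, after which $\lambda$ and $\wt F$ are defined by direct formulas. Imposing $\mathrm{trace}(\wt F) = 0$ on the ansatz $\wt F := F - \lambda g - \D v$ and using $\mathrm{trace}(g) = n - 1$ together with $\mathrm{trace}(\D v) = \mathrm{div}(v)$ (with $\mathrm{div}(v) := \sum_k \PD_k v_k$) forces
\begin{equation*}
\lambda = \frac{1}{n-1}\bigl(\mathrm{trace}(F) - \mathrm{div}(v)\bigr).
\end{equation*}
Substituting this into the condition $\delta \wt F = 0$, and using $(\delta(\lambda g))_i = g_{ii}\PD_i \lambda$ together with $(\delta \D v)_i = \tfrac{1}{2}\PD_i\mathrm{div}(v) + \tfrac{1}{2}\Delta v_i$ (where $\Delta$ is the Euclidean Laplacian on $\Rb^{1+n}$), produces the second-order system
\begin{equation*}
\tfrac{1}{2}\Delta v_i + A_i \PD_i\mathrm{div}(v) = (\delta F)_i - \tfrac{g_{ii}}{n-1}\PD_i\mathrm{trace}(F), \qquad i = 0, 1, \ldots, n,
\end{equation*}
with $A_i := \tfrac{1}{2} - \tfrac{g_{ii}}{n-1}$. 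I aim to solve this under the Dirichlet condition $v|_{\PD \Omega} = 0$.

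I will next verify ellipticity. The principal symbol is $S_{ij}(\xi) = -\tfrac{1}{2}|\xi|^2 \delta_{ij} - A_i \xi_i \xi_j$. If $S(\xi)v = 0$ for some $\xi \neq 0$, solving for $v_i$ and pairing with $\xi$ yields $|\xi|^2 + 2\sum_i A_i \xi_i^2 = 0$ whenever $\xi \cdot v \neq 0$. But $1 + 2A_i$ equals $\tfrac{2n}{n-1}$ for $i = 0$ and $\tfrac{2(n-2)}{n-1}$ for $i \geq 1$, both strictly positive when $n \geq 3$, giving a contradiction; hence $\xi \cdot v = 0$, and then $v = 0$ follows immediately from the symbol equation. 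Thus the system is elliptic precisely in the assumed regime $n \geq 3$.

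Solvability will follow from the Fredholm alternative for elliptic Dirichlet problems of Agmon-Douglis-Nirenberg type, once I prove injectivity on $H^1_0(\Omega)^{1+n}$. When $n \geq 4$, every $A_i$ is strictly positive, and testing the homogeneous system against $v_i / A_i$ followed by integration by parts with $v|_{\PD \Omega} = 0$ gives
\begin{equation*}
\sum_{i=0}^{n} \int_{\Omega} \frac{|\nabla v_i|^2}{2 A_i}\, \D x + \int_{\Omega} |\mathrm{div}(v)|^2\, \D x = 0,
\end{equation*}
forcing $v = 0$. The case $n = 3$ is borderline since $A_1 = A_2 = A_3 = 0$ causes the system to decouple: first, the equations $\Delta v_i = 0$ for $i \geq 1$ with zero boundary data give $v_1 = v_2 = v_3 = 0$; the residual equation for $v_0$ then reduces to $3\PD_0^2 v_0 + \Delta_x v_0 = 0$, an elliptic equation whose only Dirichlet solution is zero. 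In either case, elliptic regularity up to the smooth boundary $\PD \Omega$ promotes the weak solution to $v \in C^\infty(\overline{\Omega})^{1+n}$; defining $\lambda$ by the formula above and $\wt F := F - \lambda g - \D v$ then completes the decomposition. The main obstacle will be the dimensional degeneracy at $n = 3$, where the unified energy identity fails and the system must be unwound component-by-component as sketched above; the verification that the Dirichlet condition complements this non-self-adjoint elliptic system (so that Agmon-Douglis-Nirenberg theory applies) is a secondary but routine point.
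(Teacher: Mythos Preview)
Your overall strategy coincides with the paper's: derive the second-order system for $v$ from the trace and divergence constraints, solve it with Dirichlet data, then define $\lambda$ and $\wt F$ by the resulting formulas. The derivation of the system and the treatment of the decoupled case $n=3$ match the paper almost verbatim.

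For $n\geq 4$ your energy identity, obtained by testing the $i$-th homogeneous equation against $v_i/A_i$, is a genuine simplification of the paper's argument. The paper instead multiplies the $i$-th equation by $v_i$ (unweighted), sums, and is then forced into a rather delicate analysis: it rewrites the resulting integrand as a quadratic in $a=\PD_0 v_0$ and shows its discriminant is negative using $b^2\leq nc$. Your weighted pairing bypasses all of this and yields a manifestly sign-definite identity in one line. That is a real gain.

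There is, however, a gap in your solvability step. You write that ``solvability will follow from the Fredholm alternative \ldots\ once I prove injectivity on $H^1_0(\Omega)^{1+n}$.'' This presupposes that the Dirichlet realization of your operator has Fredholm index zero, which is not automatic from Petrovsky ellipticity (invertibility of the principal symbol) alone. The paper deals with this explicitly in two ways you do not: first, it proves \emph{strong} ellipticity by showing that the symmetrized symbol $\tfrac12(A(\xi)+A(\xi)^T)$ is positive definite (this yields G\aa rding's inequality and hence index zero, and also makes the Dirichlet condition automatically complementing); second, it proves directly that the co-kernel is trivial by deriving for the adjoint system the \emph{same} integral identity as for the original one and rerunning the discriminant argument. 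Your weighted trick does not obviously transfer to the adjoint $(L^*w)_j=\tfrac12\Delta w_j+\PD_j\bigl(\sum_i A_i\PD_i w_i\bigr)$, since the coupling no longer factors through $\mathrm{div}(w)$ after testing against $w_j/A_j$. So as written, your proposal is incomplete at precisely the point the paper works hardest; you should either establish strong ellipticity (a short quadratic-form computation, carried out in the paper) or give a separate argument for the adjoint kernel.
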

	
	See also \cite{Sharafutdinov2007VariationsOD}, where a decomposition result similar in spirit to the one above is shown in a Riemannian setting. 
	
	Combining the above two results, we get the following desired characterization.
	\begin{theorem}\label{Main Theorem 2 for LRT of 2-tensor fields in R3}
		Let $F\in C^{\infty}(\overline{\Omega}; S^2 \Rb^{1+n})$. If for a fixed $\theta_{0}\in \Sb^{n-1}$,
		\begin{align*}
			LF(t,x,\Tl{\theta})=0, \ \mbox{ for all } (t,x)\in\mathbb{R}^{1+n}\ \mbox{ and }\ \theta \mbox{ near } \theta_{0},
		\end{align*}
		then $F=\lambda g + \D v$, where $\lambda\in C^{\infty}(\overline{\Omega})$, $g$ is the Minkowski metric and $v$ is a vector field with components in $C^{\infty}(\overline{\Omega})$ with $v|_{\PD\Omega}=0$. %, and $\D$ is the symmetrized derivative.
	\end{theorem}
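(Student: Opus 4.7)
The plan is to reduce Theorem~\ref{Main Theorem 2 for LRT of 2-tensor fields in R3} to Theorem~\ref{Main Theorem for LRT of 2-tensor fields in R3} via the decomposition provided by Theorem~\ref{DT}. I would first invoke Theorem~\ref{DT} to write
\[
F = \wt F + \lambda g + \D v,
\]
with $\delta \wt F = \mathrm{trace}(\wt F) = 0$ and $v|_{\PD \Omega} = 0$. The heart of the argument is then to verify that neither $\lambda g$ nor $\D v$ contributes to the light ray transform along any null direction $\wt\theta = (1,\theta)$, $\theta \in \Sb^{n-1}$. Given those two cancellations, $L\wt F = LF$ vanishes for $\theta$ near $\theta_{0}$, so Theorem~\ref{Main Theorem for LRT of 2-tensor fields in R3} forces $\wt F = 0$ and the stated decomposition $F = \lambda g + \D v$ follows immediately.

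For the $\lambda g$ term, the integrand in \eqref{definition of L} carries the scalar $\wt\theta^{i}\wt\theta^{j} g_{ij} = -1 + |\theta|^{2} = 0$, so $L(\lambda g)(t,x,\wt\theta)$ vanishes pointwise in $s$ along every null ray. For the $\D v$ term, the symmetry of $\wt\theta^{i}\wt\theta^{j}$ together with the chain rule along the curve $s \mapsto (t+s, x+s\theta)$ (whose velocity is precisely $\wt\theta$) rewrites the integrand as
\[
\wt\theta^{i}\wt\theta^{j}(\D v)_{ij}(t+s,x+s\theta) \;=\; \frac{\D}{\D s}\bigl[\wt\theta^{j} v_{j}(t+s,x+s\theta)\bigr],
\]
so integration in $s$ telescopes. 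The boundary condition $v|_{\PD\Omega} = 0$ supplied by Theorem~\ref{DT} then kills every boundary contribution, and the extension-by-zero convention takes care of the terms at $s = \pm\infty$.

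The main obstacle is modest and essentially bookkeeping: making the telescoping step rigorous when the zero-extension of $v$ is only continuous (and not $C^{1}$) across $\PD\Omega$. I would handle this by splitting the $s$-integration at the finitely many points where the affine line $(t+s, x+s\theta)$ crosses $\PD \Omega$ and applying the fundamental theorem of calculus on each subinterval; at each such crossing $v$ vanishes, so the boundary contributions cancel in pairs. Once both vanishing statements are in hand, the proof closes with a single appeal to Theorem~\ref{Main Theorem for LRT of 2-tensor fields in R3} applied to $\wt F$. No genuinely new analytic input beyond the two preceding theorems is required.
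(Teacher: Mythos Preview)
Your proposal is correct and follows essentially the same approach as the paper: decompose $F$ via Theorem~\ref{DT}, verify that $\lambda g$ and $\D v$ lie in the kernel of $L$ (the paper likewise invokes the signature of $g$ for the former and the fundamental theorem of calculus for the latter), and then apply Theorem~\ref{Main Theorem for LRT of 2-tensor fields in R3} to $\wt F$. Your treatment of the boundary-crossing issue for the $\D v$ term is in fact more explicit than the paper's, which simply asserts the fundamental theorem of calculus suffices.
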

	See \cite[Lemma 9.1]{LOSU} as well, where a version of Theorem \ref{Main Theorem 2 for LRT of 2-tensor fields in R3} is proven in the Euclidean setting in space dimensions $n=3$.
	\section{Proofs} 
	
	We prove two lemmas that would immediately give the proof of Theorem \ref{Main Theorem for LRT of 2-tensor fields in R3}. As mentioned already, we will extend $F$ as $0$ outside $\overline{\Omega}$. 
	\begin{lemma}
		Under the assumptions of Theorem \ref{Main Theorem for LRT of 2-tensor fields in R3}, we have 
		the following equality: 
		\[
		\tilde{\theta}^{i}\tilde{\theta}^{j} \wh{F}_{ij}(\zeta)=0 \mbox{ for all } \zeta \in (1,\theta)^{\perp} \mbox{ and } \theta \mbox{ near } \theta_{0}
		\]
		 where $\theta\in \Sb^{n-1}$ and $\perp$ is with respect to the Euclidean metric.
	\end{lemma}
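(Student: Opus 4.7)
The plan is to prove this lemma by a direct application of the Fourier slice theorem for the X-ray transform. Note that neither the divergence-free condition nor the trace-free condition is used here; this lemma is really a statement about scalar X-ray transforms along null directions that happens to apply to the contraction $\tilde{\theta}^i \tilde{\theta}^j F_{ij}$.

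First, I would define the scalar function $G_{\tilde{\theta}}(t,x) := \tilde{\theta}^i \tilde{\theta}^j F_{ij}(t,x)$. Because $F$ has been extended by zero outside $\overline{\Omega}$ and the $F_{ij}$ are smooth with compact support, $G_{\tilde{\theta}}$ is a compactly supported smooth function on $\mathbb{R}^{1+n}$, depending smoothly on the parameter $\tilde{\theta}$. In this notation the definition \eqref{definition of L} reads
\[
LF(t,x,\tilde{\theta}) = \int_{\mathbb{R}} G_{\tilde{\theta}}(t+s, x+s\theta)\, \D s,
\]
so that $LF(\cdot, \cdot, \tilde{\theta})$ is simply the X-ray transform of the scalar function $G_{\tilde{\theta}}$ along lines in $\mathbb{R}^{1+n}$ parallel to $\tilde{\theta}=(1,\theta)$.

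Next, I would compute the Fourier transform of $LF(\cdot,\cdot,\tilde{\theta})$ in the $(t,x)$ variables. Writing $\zeta = (\tau,\xi) \in \mathbb{R} \times \mathbb{R}^n$ and changing variables $(t',x') = (t+s, x+s\theta)$ under Fubini (justified by compact support), a standard calculation gives
\[
\widehat{LF}(\tau,\xi,\tilde{\theta}) = 2\pi\, \delta\!\lb \tau + \xi\cdot\theta\rb\, \widehat{G}_{\tilde{\theta}}(\tau,\xi),
\]
which is the usual Fourier slice identity: the Fourier transform of the X-ray transform along $\tilde{\theta}$ is supported on the Euclidean hyperplane $\tilde{\theta}^{\perp}=\{(\tau,\xi) : \tau + \xi\cdot\theta = 0\}$.

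Finally, the hypothesis of Theorem \ref{Main Theorem for LRT of 2-tensor fields in R3} gives $LF(t,x,\tilde{\theta}) = 0$ for all $(t,x)\in \mathbb{R}^{1+n}$ and all $\theta$ near $\theta_{0}$. Hence $\widehat{LF}(\tau,\xi,\tilde{\theta}) = 0$ for such $\theta$, and the slice identity forces $\widehat{G}_{\tilde{\theta}}(\zeta) = 0$ for every $\zeta \in \tilde{\theta}^{\perp}$ (one can argue this pointwise since $\widehat{G}_{\tilde{\theta}}$ is smooth by compact support of $G_{\tilde{\theta}}$, so the product with the line distribution $\delta(\tau + \xi\cdot\theta)$ vanishes iff $\widehat{G}_{\tilde{\theta}}$ itself vanishes on the hyperplane). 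Unwinding the definition of $G_{\tilde{\theta}}$, this is exactly
\[
\tilde{\theta}^i \tilde{\theta}^j \widehat{F}_{ij}(\zeta) = 0 \quad \text{for all } \zeta \in (1,\theta)^{\perp} \text{ and } \theta \text{ near } \theta_{0},
\]
as claimed. There is no real obstacle here beyond keeping track of the change of variables and the support of the delta distribution; the statement is morally the Fourier slice theorem packaged with the scalar contraction $\tilde{\theta}^i\tilde{\theta}^j F_{ij}$.
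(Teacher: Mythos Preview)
Your proof is correct and follows essentially the same approach as the paper: both are the Fourier slice theorem for the light ray transform, which the paper explicitly names as such. The only cosmetic difference is that the paper computes $\widehat{F}_{ij}(\zeta)$ directly by decomposing $\mathbb{R}^{1+n}=\mathbb{R}(1,\theta)\oplus(1,\theta)^{\perp}$ and applying Fubini, whereas you take the distributional Fourier transform of $LF(\cdot,\cdot,\tilde\theta)$ and read off the vanishing on the hyperplane from the factor $\delta(\tau+\xi\cdot\theta)$; these are two standard presentations of the same identity.
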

		\begin{proof}
		This is the Fourier slice theorem for the light ray transform. This result is standard;  see \cite{Stefanov_Support_Theorem_Lorentzian_Manifold_2017}. 
		We consider the Fourier transform of $F_{ij}$: 
		\begin{align}
			\widehat{F}_{ij}(\zeta)&=\int\limits_{\mathbb{R}^{1+n}}F_{ij}(t,x)e^{-\I (t,x)\cdot\zeta}\, \D t \D x.
		\end{align}
		Using the decomposition, 
		$\mathbb{R}^{1+n}=\mathbb{R}(1,\theta)\oplus \ell$ with  $\ell\in (1,\theta)^{\perp}$ combined with Fubini's theorem, we get
		\begin{align*}
			\wh{F}_{ij}(\zeta)=\sqrt{2}\int\limits_{(1,\theta)^{\perp}}\int\limits_{\mathbb{R}}F_{ij}(\ell+s(1,\theta))e^{-\I (\ell +s(1,\theta))\cdot\zeta}\, \D s\, \D \ell.
		\end{align*}
		
		If $\zeta\in (1,\theta)^{\perp}$, then 
		\begin{align*}
			\tilde{\theta}^{i}\tilde{\theta}^{j}\widehat{F}_{ij}(\zeta)=\sqrt{2}\int\limits_{(1,\theta)^{\perp}}\int\limits_{\mathbb{R}}{\tilde{{\theta^{i}}}}{\tilde{{\theta^{j}}}}F_{ij}(s(1,\theta)+\ell)e^{-i\ell\cdot\zeta}\, \D s\, \D\ell.
		\end{align*}
		
		Using the fact that 
		\begin{align}\label{vanishing of integration of hijk}
			\int\limits_{\mathbb{R}}\tilde{\theta}^{i}\tilde{\theta}^{j}F_{ij}(t+s,x+s\theta)\, \D s=0, \mbox{ for all }(t,x)\in\mathbb{R}^{1+n},\  \mbox{and}\ \theta\ \text{near}\ \theta_{0},
		\end{align} we  get 
		\begin{align}\label{Vanishing of Fourier Coefficients}
			\tilde{\theta}^{i}\tilde{\theta}^{j}\wh{F}_{ij}(\zeta)=0 \mbox{ for all }\ \zeta\in(1,\theta)^{\perp}\ \text{with}\ \theta\ \text{near}\ \theta_{0}.
		\end{align}
	\end{proof}
	In the following lemma, without loss of generality, we fix $\theta_{0}=(1,0\cdots,0)\in \Sb^{n-1}$.
	\begin{lemma}\label{Main Lemma}
		Let $F\in C^{\infty}(\overline{\Omega}; S^2 \Rb^{1+n})$ be such that $\delta F=0$ and $\mathrm{trace}(F)=0$. Suppose also that $\tilde{\theta}^{i}\tilde{\theta}^{j}\widehat{F}_{ij}(\zeta)=0$ for  $\zeta\in(1,\theta)^{\perp}$ and  $\theta  \text{ near }  \theta_{0}$. 
		Then  
		\[
		\wh{F}_{ij}(\zeta)=0 
		\]
		in a small conical neighborhood of the space-like vector $\zeta_{0}=(0,0,1,0,\cdots,0)\in\Rb^{1+n}$. 
	\end{lemma}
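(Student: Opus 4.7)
The plan is, for each fixed $\zeta=(\tau,\xi)$ in a small conical neighborhood of $\zeta_{0}=(0,e_{2})$ (where $|\xi|$ is bounded from below and $\tau^{2}<|\xi|^{2}$), to exploit the three linear constraints at our disposal — the light-ray hypothesis, $\delta F=0$, and $\mathrm{trace}(F)=0$ — to force every component $\widehat{F}_{ij}(\zeta)$ to vanish. The light-ray hypothesis will first be converted into a sharp algebraic representation of the quadratic $P(\theta):=\tilde{\theta}^{i}\tilde{\theta}^{j}\widehat{F}_{ij}(\zeta)$, and the remaining two constraints will then collapse this representation to zero.

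For the algebraic step, observe that $P$ is a quadratic polynomial in $\theta\in\Rb^{n}$ that, by hypothesis, vanishes on a neighborhood of $\theta_{0}=e_{1}$ inside the real-analytic variety $V_{\zeta}:=\{\theta\st|\theta|^{2}=1,\ \theta\cdot\xi+\tau=0\}$. For $\zeta$ near $\zeta_{0}$ the unit sphere and the hyperplane meet transversally, so $V_{\zeta}$ is a smooth connected $(n-2)$-sphere (using $n\ge 3$), and analyticity upgrades local vanishing of $P$ to vanishing on all of $V_{\zeta}$. Parameterizing $V_{\zeta}$ by $\theta=w-(\tau/|\xi|^{2})\xi$ with $w\in\xi^{\perp}$, $|w|^{2}=r^{2}:=1-\tau^{2}/|\xi|^{2}$, the pullback of $P$ becomes a quadratic polynomial in $w\in\Rb^{n-1}$ vanishing on the round sphere $|w|=r$; the classical spherical-harmonic decomposition (separating degrees $0$, $1$, $2$, valid because $n-1\ge 2$) forces this restricted polynomial to be a scalar multiple of $|w|^{2}-r^{2}$. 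Lifting back to $\Rb^{n}_{\theta}$ and absorbing the ambiguity modulo $\theta\cdot\xi+\tau$, I obtain
\[
P(\theta)=\mu(\zeta)\,(|\theta|^{2}-1)+\bigl(\nu_{0}(\zeta)+\nu_{k}(\zeta)\,\theta_{k}\bigr)(\theta\cdot\xi+\tau)
\]
for scalars $\mu,\nu_{0}$ and a vector $\nu=(\nu_{k})$ depending on $\zeta$. Matching coefficients of $1$, $\theta_{k}$, $\theta_{k}\theta_{\ell}$ against $P=\widehat{F}_{00}+2\theta_{k}\widehat{F}_{0k}+\theta_{k}\theta_{\ell}\widehat{F}_{k\ell}$ then reads off
\[
\widehat{F}_{00}=-\mu+\nu_{0}\tau,\qquad 2\widehat{F}_{0k}=\nu_{0}\xi_{k}+\tau\nu_{k},\qquad \widehat{F}_{k\ell}=\mu\,\delta_{k\ell}+\tfrac{1}{2}(\nu_{k}\xi_{\ell}+\nu_{\ell}\xi_{k}).
\]

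Next, I would invoke the divergence condition $\zeta^{j}\widehat{F}_{ij}(\zeta)=0$. Its spatial part gives $(\tau^{2}+|\xi|^{2})\nu_{i}+(\tau\nu_{0}+2\mu+\nu\cdot\xi)\xi_{i}=0$, forcing $\nu=c\xi$ for a scalar $c=-(\tau\nu_{0}+2\mu)/(\tau^{2}+2|\xi|^{2})$; the time part then, after substitution, yields $\nu_{0}=\mu\tau(\tau^{2}+3|\xi|^{2})/(\tau^{2}+|\xi|^{2})^{2}$. Substituting these into $\mathrm{trace}(\widehat{F})=(n-1)\mu+\nu_{0}\tau+c|\xi|^{2}=0$ and simplifying (a clean $\tau^{4}-|\xi|^{4}$ cancellation appears) collapses the whole system to the single scalar identity
\[
\mu(\zeta)\cdot\frac{n\tau^{2}+(n-2)|\xi|^{2}}{\tau^{2}+|\xi|^{2}}=0.
\]
Because $|\xi|>0$ and $n\ge 3$ keep the coefficient strictly positive throughout our conical neighborhood of $\zeta_{0}$, one concludes $\mu(\zeta)=0$, whence $\nu_{0}=0$, $c=0$, $\nu=0$, and finally $\widehat{F}_{ij}(\zeta)=0$ for all $i,j$.

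The main obstacle is the algebraic step: converting the hypothesis ``$P$ vanishes on the codimension-$2$ set $V_{\zeta}$'' into the sharp representation of $P$ as a polynomial on all of $\Rb^{n}$. The spherical-harmonic observation that a quadratic in $\Rb^{n-1}$ vanishing on a round sphere must be proportional to $|w|^{2}-r^{2}$ is precisely what makes the representation tight enough, and is also where the assumption $n\ge 3$ is essential (one needs $n-1\ge 2$ for the constrained set to be a genuine sphere). After that, everything reduces to linear algebra; the nonvanishing of $n\tau^{2}+(n-2)|\xi|^{2}$ reflects both the dimension hypothesis $n\ge 3$ and the space-like character of $\zeta_{0}$, which keeps $|\xi|$ bounded from below.
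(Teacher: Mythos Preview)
Your argument is correct and takes a genuinely different route from the paper's. The paper proceeds by choosing explicit one-parameter perturbations $\theta_{0}(a)=\cos a\,e_{1}+\sin a\,e_{k}$ (and, for $n\ge 4$, also $\theta_{kl}(a)$), differentiating the identity $\tilde{\theta}^{i}\tilde{\theta}^{j}\widehat{F}_{ij}(\zeta_{0})=0$ four times in $a$, and solving the resulting explicit linear system together with the divergence and trace relations to get $\widehat{F}_{ij}(\zeta_{0})=0$; for nearby $\zeta$ it introduces a rotation $A$ sending $\zeta'$ to $e_{2}$, repeats the construction, and invokes continuity of the $10\times 10$ (resp.\ larger) coefficient determinant in $(\alpha,\beta,\varphi)$ to transfer nonvanishing from $\zeta_{0}$ to nearby points. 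Your approach instead identifies, for each fixed $\zeta$, the exact algebraic content of the light-ray hypothesis: $P(\theta)$ vanishes on the connected $(n-2)$-sphere $V_{\zeta}$, hence (by the spherical-harmonic fact that a quadratic vanishing on a round sphere is a multiple of $|w|^{2}-r^{2}$) $P$ must have the form $\mu(|\theta|^{2}-1)+(\nu_{0}+\nu\cdot\theta)(\theta\cdot\xi+\tau)$; the divergence and trace constraints then reduce to a single scalar equation with the strictly positive coefficient $\bigl(n\tau^{2}+(n-2)|\xi|^{2}\bigr)/(\tau^{2}+|\xi|^{2})$. Your route treats all $\zeta$ in the cone uniformly (no separate argument at $\zeta_{0}$ plus determinant continuity), makes the roles of $n\ge 3$ and of $\zeta$ being space-like completely transparent, and replaces the paper's case-by-case linear algebra with a clean structural statement; the paper's approach, on the other hand, is entirely elementary and avoids the polynomial/ideal-theoretic step. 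One cosmetic point: $e_{1}$ need not literally lie on $V_{\zeta}$ for $\zeta\neq\zeta_{0}$, but $V_{\zeta}$ meets any fixed neighborhood of $e_{1}$ in a nonempty relatively open set once $\zeta$ is close to $\zeta_{0}$, which is all your analytic-continuation step needs.
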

	\bpr
	In order to make the presentation clear, we first give the proof in $\Rb^{1+3}$ and then generalize it to $\Rb^{1+n}$ when $n\geq 4$.

	First let us show that $\wh{F}_{ij}(\zeta_0)=0$ for all $0\leq i,j\leq 3$. We fix $ \theta_{0}=(1,0,0)$. Note that $(1,\theta_{0})\cdot \zeta_{0}=0$. Consider   
	\Beq\label{Perturbation of theta0}
	\theta_{0}(a)=(\cos a,0,\sin a).
	\Eeq If $a$ is near $0$, then $\theta_{0}(a)$ is near $\theta_{0}$. Also note that $(1,\theta_{0}(a))\cdot \zeta_{0}=0$. % First we will show that $\widehat{F_{ij}}(\zeta^{0})=0$, for $\zeta^{0}=(0,0,1,0)$. 
	Substituting $\zeta_{0}=(0,0,1,0)$ and $\theta_{0}(a)$ as above into \eqref{Vanishing of Fourier Coefficients}, we get, %With this choice of $\zeta^{0}$, after using $\theta_{1}(a)$ as defined above in Equation \eqref{vanishing of FT thetai thetaj Fij}, we have 
	\Beq \label{Perturbed eqn}
	\left(\widehat{F}_{00}+ 2\cos{a}\widehat{F}_{01}+ 2\sin a\widehat{F}_{03}+\cos^{2}a\widehat{F}_{11}+2\sin a\cos a\widehat{F}_{13}+\sin^{2}a\widehat{F}_{33}\right)(\zeta_{0})=0.
	\Eeq
	Let us differentiate this equation with respect to $a$ four times. We get,
	\begin{align}
		\label{Once diff equation}&\lb  -2\sin a \wh{F}_{01}+ 2\cos a \wh{F}_{03}-\sin 2a \wh{F}_{11}+ 2\cos 2a \wh{F}_{13}+\sin 2a \wh{F}_{33}\rb({\zeta_{0}})=0\\
		\label{Twice diff equation}&\lb  -2\cos a \wh{F}_{01}- 2\sin a \wh{F}_{03}-2\cos 2a \wh{F}_{11}-4\sin 2a \wh{F}_{13}+2\cos 2a \wh{F}_{33}\rb(\zeta_{0})=0\\
		\label{Three diff equation}&\lb  2\sin a \wh{F}_{01}- 2\cos a \wh{F}_{03}+4\sin 2a \wh{F}_{11}-8\cos 2a \wh{F}_{13}-4\sin 2a \wh{F}_{33}\rb(\zeta_{0})=0\\
		\label{Four diff equation}&\lb  2\cos a \wh{F}_{01}+ 2\sin a \wh{F}_{03}+8\cos 2a \wh{F}_{11}+16\sin 2a \wh{F}_{13}-8\cos 2a \wh{F}_{33}\rb(\zeta_{0})=0.
	\end{align}
	Letting $a\to 0$ in \eqref{Perturbed eqn}, \eqref{Once diff equation}, \eqref{Twice diff equation}, \eqref{Three diff equation} and \eqref{Four diff equation}, we have the following 5 equations: 
	\begin{align}
		\label{a=0, 3DEq1}&\lb\wh{F}_{00} + 2\wh{F}_{01}+\wh{F}_{11}\rb(\zeta_0)=0\\
		\label{a=0, 3DEq2}&\lb  \wh{F}_{03}+ \wh{F}_{13}\rb(\zeta_{0})=0\\
		\label{a=0, 3DEq3}& \lb \wh{F}_{01} +\wh{F}_{11} - \wh{F}_{33}\rb (\zeta_{0})=0\\
		\label{a=0, 3DEq4}&\lb \wh{F}_{03} + 4\wh{F}_{13}\rb (\zeta_{0})=0\\ 
		\label{a=0, 3DEq5}&\lb \wh{F}_{01}+4\wh{F}_{11}-4\wh{F}_{33}\rb (\zeta_{0})=0.
	\end{align}
	
	Since $\delta (F)=\mbox{trace}(F)=0$, we have 
	\begin{align}%\label{equation after using divergence and trace condition}
		&\label{3Ddelta Eq} \widehat{F}_{02}(\zeta_{0})=\widehat{F}_{12}(\zeta_{0})=\widehat{F}_{22}(\zeta_{0})=\widehat{F}_{32}(\zeta_{0})=0,\\ 
		&\label{3DTrace Eq}\left(\widehat{F}_{00}+\widehat{F}_{11}+\widehat{F}_{22}+\widehat{F}_{33}\right)(\zeta_{0})=0.
	\end{align}
	
	From \eqref{a=0, 3DEq2} and \eqref{a=0, 3DEq4}, we get that $\wh{F}_{03}(\zeta_{0})=\wh{F}_{13}(\zeta_0)=0$. Subtracting \eqref{a=0, 3DEq3} from \eqref{a=0, 3DEq5}, we get that $\wh{F}_{11}(\zeta_0)=\wh{F}_{33}(\zeta_0)$. Therefore \eqref{a=0, 3DEq3} gives that $\wh{F}_{01}(\zeta_0)=0$. Substituting $\wh{F}_{11}(\zeta_0)=\wh{F}_{33}(\zeta_0)$ into \eqref{3DTrace Eq}, and using the fact that $\wh{F}_{22}(\zeta_0)=0$ from \eqref{3Ddelta Eq}, we get that $\wh{F}_{00}(\zeta_0)+2\wh{F}_{11}(\zeta_0)=0$. Combining this with \eqref{a=0, 3DEq1}, we get that $\wh{F}_{00}(\zeta_0)=\wh{F}_{11}(\zeta_0)=0$. Combining all these, we have now shown that $\wh{F}_{ij}(\zeta_0)=0$ for all $0\leq i,j\leq 3$.

    	Next our goal is to show that if $\zeta$ is any non-zero space-like vector in a small enough conical neighborhood (in the Euclidean sense) of $\zeta_{0}$, then {$\wh{F}_{ij}(\zeta)=0$}, for $0\leq i,j\leq 3$ as well. We recall that a non-zero vector $\zeta=(\zeta^{0},\zeta^1,\zeta^2,\zeta^3)$ is space-like if $|\zeta^{0}|<\lVert(\zeta^1,\zeta^2,\zeta^3)\rVert$, where the norm $\lVert \cdot \rVert$ refers to the Euclidean norm.

	We start with a unit vector  $(\zeta^1,\zeta^2,\zeta^3)$ in $\Rb^3$, and we choose $\zeta^0=-\sin \vp$. Then $(-\sin \vp,\zeta^1,\zeta^2,\zeta^3)$ is a space-like vector for $-\pi/2<\vp<\pi/2$.

	Let us recall that in showing  {$\wh{F}_{ij}(\zeta_{0})=0$}, we considered a perturbation $\theta_{0}(a)$ (see  \eqref{Perturbation of theta0}) of the vector $ \theta_{0}=(1,0,0)$. Note that we required that $ \theta_{0}(a)$ was close enough to $\theta_{0}$ and  $(1, \theta_{0}(a))\cdot \zeta_{0}=0$. The following calculations are motivated by having these same requirements for the vector $\zeta=(-\sin \vp,\zeta^1,\zeta^2,\zeta^3)$.
	
	Since we are interested in a non-zero space-like vector in a small enough conical neighborhood of $\zeta_{0}$, let us choose   
	\begin{align*}
		\zeta^{1}=\sin\alpha\cos\beta,\ \zeta^{2}=\cos\alpha \ \text{and}\ \zeta^{3}=\sin\alpha\sin\beta.
	\end{align*}
	Then 
	clearly $\zeta$ is close to $(0,1,0)$ whenever $\alpha$ and $\beta$ are close enough to $0$, and choosing $\vp$ close to $0$, we get that the space-like vector $\zeta=(-\sin \vp,\zeta^1,\zeta^2,\zeta^3)$ is close enough to $(0,0,1,0)$.

	Next choose $\theta_{0}(\vp)= (\cos\vp, \sin\vp,0)$ for $\vp$ close to $0$ and the perturbation  of $\theta_{0}(\vp)$ for $a$ close to $0$ by \[
	\theta_{0}(\vp,a)= \lb  \cos a \cos \vp, \sin\vp,\sin a \cos \vp\rb.
	\]
	Our goal is next to modify $\theta_{0}(\vp,a)$ to ${\Theta}_{0}(\vp,a)$ such that $(1, {\Theta}_{0}(\vp,a))\cdot \zeta=0$. 
	To this end, let us consider the orthogonal matrix $A$: 
	\begin{align*}
		A=
		\begin{bmatrix}
			\cos\alpha\cos\beta& -\sin\alpha& \cos\alpha\sin\beta\\
			\sin\alpha\cos\beta &\cos\alpha& \sin\alpha\sin\beta\\
			-\sin\beta &0&\cos\beta
		\end{bmatrix}
		=
		\begin{bmatrix}
			a_{11}&a_{12}&a_{13}\\a_{21}&a_{22}&a_{23}\\a_{31}&a_{32}&a_{33}
		\end{bmatrix}.
	\end{align*}
	
	Define ${\Theta}_{0}$, ${\Theta}_{0}(\vp)$ and $ {\Theta}_{0}(a,\vp)$ by
	\begin{align*}
		\Theta_{0}= A^{T}
		\begin{bmatrix}
			1\\0\\0
		\end{bmatrix}
		=
		\begin{bmatrix}
			\cos\alpha\cos\beta\\
			-\sin\alpha\\
			\cos\alpha\sin\beta
		\end{bmatrix},
	\end{align*}
	\[
	\Theta_{0}(\vp)= A^T  \theta_{0}(\vp)=A^{T}\begin{bmatrix}
	\cos \vp\\
	\sin \vp\\
	0
	\end{bmatrix}= 
	\begin{bmatrix}
	a_{11}\cos \vp+ a_{21}\sin \vp\\
	a_{12}\cos \vp+a_{22}\sin \vp\\
	a_{13}\cos \vp+a_{23}\sin \vp
	\end{bmatrix}
	\]
	and 
	\[
	\Theta_{0}(\vp,a)=A^{T}\theta_{0}(\vp,a)
	=
	\begin{bmatrix}
	a_{11}\cos a\cos\vp+a_{21}\sin\vp + a_{31}\sin a\cos\vp\\
	a_{12}\cos a\cos\vp+a_{22}\sin\vp+a_{32}\sin a \cos \vp\\
	a_{13}\cos a\cos\vp+a_{23}\sin\vp + a_{33}\sin a\cos\vp
	\end{bmatrix}
	=
	\begin{bmatrix}
	A_{1}(a)\\A_{2}(a)\\A_{3}(a)
	\end{bmatrix}.
	\]
	
	We first note that if $a, \vp, \A$ and $\B$ are close enough to $0$, then $ {\Theta}_{0}(\vp,a)$ is close enough to $ \theta_{0}$. As before, defining $\wt{\Theta}(\vp,a)=(1,{\Theta}(\vp,a))$, we have $LF(t,x, \wt{\Theta}_{0}(\vp,a))$ is $0$. 
	
	Next we show that for all $\vp,a,\A$ and $\B$ close enough to $0, (1, {\Theta}_{0}(\vp,a))\cdot \zeta=0$. To see this, consider
	\begin{align*}
		(-\sin \vp, \sin \A \cos \B,\cos \A,\sin \A\sin \B)\cdot (1, {\Theta}_{0}(\vp))&= - \sin \vp+ \langle \lb \zeta^1,\zeta^2,\zeta^3\rb,A^T \lb  \theta_{0}(\vp,a)\rb\rangle\\
		&=-\sin \vp + \langle  A \lb \zeta^1,\zeta^2,\zeta^3\rb,\theta_{0}(\vp,a)\rangle.
	\end{align*}
	The matrix $A$ is such that $A\lb \zeta^1,\zeta^2,\zeta^3\rb=(0,1,0)$.\\
	Since $\theta_{0}(\vp,a)=( \cos a \cos \vp,\sin \vp,\sin a \cos \vp)$, we now get that $(1, {\Theta}_{0}(\vp,a))\cdot \zeta=0$. Using this choice of ${\Theta}_{0}(\vp,a)$ in \eqref{Vanishing of Fourier Coefficients}, we get 
	
	\begin{align}\label{theta(a) in general case}
		\notag&\Bigg(\widehat{F}_{00}+2A_{1}(a)\widehat{F}_{01}+2A_{2}(a)\widehat{F}_{02}+2A_{3}(a)\widehat{F}_{03}+{\left(A_{1}(a)\right)}^{2}\widehat{F}_{11}+2A_{1}(a)A_{2}(a)\widehat{F}_{12}\\
		&+2A_{1}(a)A_{3}(a)\widehat{F}_{13}+\left(A_{2}(a)\right)^{2}\widehat{F}_{22}+2A_{2}(a)A_{3}(a)\widehat{F}_{23}+\left(A_{3}(a)\right)^{2}\widehat{F}_{33}\Bigg)(\zeta)=0.
	\end{align}
	As before, we consider \eqref{theta(a) in general case} and differentiate it 4 times and let $a\to 0$. These would give 5 equations. Also since $F$ is divergence free and trace free, we have the following 5 equations:  
	\begin{align}\label{equation after using the divergence free and trace free conditions}
		\begin{aligned}
			&\left(-\sin\vp\widehat{F}_{00}+a_{21}\widehat{F}_{01}+a_{22}\widehat{F}_{02}+a_{23}\widehat{F}_{03}\right)(\zeta)=0\\
			&\left(-\sin\vp\widehat{F}_{10}+a_{21}\widehat{F}_{11}+a_{22}\widehat{F}_{12}+a_{23}\widehat{F}_{13}\right)(\zeta)=0\\
			&\left(-\sin\vp\widehat{F}_{20}+a_{21}\widehat{F}_{21}+a_{22}\widehat{F}_{22}+a_{23}\widehat{F}_{23}\right)(\zeta)=0\\
			&\left(-\sin\vp\widehat{F}_{30}+a_{21}\widehat{F}_{31}+a_{22}\widehat{F}_{32}+a_{23}\widehat{F}_{33}\right)(\zeta)=0\\
			& \lb \wh{F}_{00}+\wh{F}_{11}+\wh{F}_{22}+\wh{F}_{33}\rb(\zeta)=0.
		\end{aligned}
	\end{align}
	
	Together, these would give 10 equations and determinant of the matrix formed by the coefficients is continuous as a function of $\alpha,\beta$ and $\varphi$. We show that this determinant 
	 is non-zero, which would give that $\wh{F}_{ij}(\zeta)=0$ for $0\leq i,j\leq 3$. In order to show that the determinant is non-vanishing, it is enough to observe that as $\A,\B,\vp\to 0$ in these 10 equations, 
	we would get the same set of equations as in \eqref{a=0, 3DEq1} - \eqref{3DTrace Eq}. However, we have already shown that {$\wh{F}_{ij}(\zeta_{0})=0$} for $0\leq i,j\leq 3$, using these equations. By continuity of the determinant, we have that the matrix of coefficients formed by the 10 equations mentioned above has non-zero determinant when $\alpha,\beta,\varphi$ close to $0$.  Hence we have $\wh{F}_{ij}(\zeta)=0$ for $0\leq i,j\leq 3$, where $\zeta=(-\sin \vp, \sin \A \cos \B, \cos \A, \sin \A \sin \B)$, with $\A,\B$ and $\vp$ are near $0$. Repeating the same argument as above, we can show that $\wh{F}_{ij}(\lambda \zeta)=0$ for $0\leq i,j\leq 3$, where $\zeta$ is as above and $\lambda >0$. This concludes the Lemma \ref{Main Lemma} for the case of $n=3$. 
	
	Now we consider the general case $n\geq 4$.
	
	As before, first let us show that $\wh{F}_{ij}(\zeta_0)=0$ for all $0\leq i,j\leq n$, where recall that  $\zeta_{0}=(0,0,1,0,\cdots,0)$.  We fix $ \theta_{0}=(1,0,0,\cdots,0)\in\Sb^{n-1}$. Note that $(1,\theta_{0})\cdot \zeta_{0}=0$. Consider   
	\begin{align}\label{Perturbation of theta0 in higher dimensions}
		& \theta_{k}(a)=\cos a e_{1}+\sin a e_{k} \mbox{ for } k\geq 3,\\
		& \theta_{kl}(a)=\cos a e_{1}+\frac{1}{\sqrt{2}}\sin a e_{k}+\frac{1}{\sqrt{2}}\sin a e_{l}  \mbox{ for } 3\leq k<l\leq n
	\end{align}
	where $e_{j}\in\Rb^{n}$ be vector in $\Rb^{n}$ whose $j^{\rm th}$ entry is $1$ and other entries  are zero. If $a$ is near $0$, then $\theta_{0}(a)$ is near $\theta_{0}$. Also note that $(1,\theta_{k}(a))\cdot \zeta_{0}=0$ and $\lb 1,\theta_{kl}(a)\rb\cdot\zeta_{0}=0$. Now substituting this choice of $\zeta_{0}$, $\theta_{k}(a)$ and $\theta_{kl}(a)$  into \eqref{Vanishing of Fourier Coefficients}, we get, 
	\Beq \label{Perturbed eqn in higher dimensions}
	\left(\widehat{F}_{00}+ 2\cos{a}\widehat{F}_{01}+ 2\sin a\widehat{F}_{0k}+\cos^{2}a\widehat{F}_{11}+ 2\sin a\cos a\widehat{F}_{1k}+\sin^{2}a\widehat{F}_{kk}\right)(\zeta_{0})=0\mbox{ for } k\geq 3.
	\Eeq
	\begin{equation}\label{Perturbed equation with theta kl}
		\begin{aligned}
			&\Big(\widehat{F}_{00}+ 2\cos a\widehat{F}_{01}+\sqrt{2}\sin a \widehat{F}_{0k}+\sqrt{2}\sin a\widehat{F}_{0l}+\cos^{2}a\widehat{F}_{11}+ \sqrt{2}\sin a\cos a\widehat{F}_{1k}\\
			&\ \ \ +\sqrt{2}\sin a\cos a\widehat{F}_{1l}+\frac{\sin^{2}a}{2}\lb\widehat{F}_{kk}+2\widehat{F}_{kl}+\widehat{F}_{ll}\rb\Big)(\zeta_{0})=0 \mbox{ for } 3\leq k<l\leq n.
		\end{aligned}
	\end{equation}
	Differentiating \eqref{Perturbed eqn in higher dimensions} 4 times and letting $a\to 0$, we arrive at the following equations: 
	
	\begin{align}
		\label{a=0, NDEq1}&\lb\wh{F}_{00} + 2\wh{F}_{01}+\wh{F}_{11}\rb(\zeta_0)=0\\
		\label{a=0, NDEq2}&\lb  \wh{F}_{0k}+ \wh{F}_{1k}\rb(\zeta_{0})=0\\
		\label{a=0, NDEq3}& \lb \wh{F}_{01} -\wh{F}_{11} + \wh{F}_{kk}\rb (\zeta_{0})=0\\
		\label{a=0, NDEq4}&\lb \wh{F}_{0k} + 4\wh{F}_{1k}\rb (\zeta_{0})=0\\ 
		\label{a=0, NDEq5a}&\lb -\wh{F}_{01}+4\wh{F}_{11}-4\wh{F}_{kk}\rb (\zeta_{0})=0.
	\end{align}

	Similarly, we differentiate \eqref{Perturbed equation with theta kl} with respect to $a$ 4 times and let $a\to 0$. We get,
	
	\begin{align}\label{one wrt theta kl}
		\begin{aligned}
			&\Big(- 2\sin a \widehat{F}_{01}+\sqrt{2}\cos a\widehat{F}_{0k}+\sqrt{2}\cos a\widehat{F}_{0l}-\sin2a\widehat{F}_{11}+\sqrt{2}\cos2a\widehat{F}_{1k}\\
			&+\sqrt{2}\cos2a\widehat{F}_{1l}+\frac{\sin2a}{2}\lb\widehat{F}_{kk}+2\widehat{F}_{kl}+\widehat{F}_{ll}\rb\Big)(\zeta_{0})=0;\ 3\leq k<l\leq n
		\end{aligned}
	\end{align}
	\begin{align}\label{two wrt theta kl}
		\begin{aligned}
			&\Big(- 2\cos a \widehat{F}_{01}-\sqrt{2}\sin a\widehat{F}_{0k}-\sqrt{2}\sin  a\widehat{F}_{0l}-2\cos2a\widehat{F}_{11}-2\sqrt{2}\sin2a\widehat{F}_{1k}\\
			&-2\sqrt{2}\sin2a\widehat{F}_{1l}+\cos2a\lb\widehat{F}_{kk}+2\widehat{F}_{kl}+\widehat{F}_{ll}\rb\Big)(\zeta_{0})=0;\ 3\leq k<l\leq n.
		\end{aligned}
	\end{align}
	
	\begin{align}\label{three wrt theta kl}
		\begin{aligned}
			&\Big(2\sin a \widehat{F}_{01}-\sqrt{2}\cos a\widehat{F}_{0k}-\sqrt{2}\cos  a\widehat{F}_{0l}+4\sin 2a\widehat{F}_{11}-4\sqrt{2}\cos 2a\widehat{F}_{1k}\\
			&-4\sqrt{2}\cos 2a\widehat{F}_{1l}-2\sin 2a\lb\widehat{F}_{kk}+2\widehat{F}_{kl}+\widehat{F}_{ll}\rb\Big)(\zeta_{0})=0;\ 3\leq k<l\leq n.
		\end{aligned}
	\end{align}\begin{align}\label{four wrt theta kl}
		\begin{aligned}
			&\Big(2\cos a \widehat{F}_{01}+\sqrt{2}\sin a\widehat{F}_{0k}+\sqrt{2}\sin  a\widehat{F}_{0l}+8\cos 2a\widehat{F}_{11}+8\sqrt{2}\sin 2a\widehat{F}_{1k}\\
			&+8\sqrt{2}\sin 2a\widehat{F}_{1l}-4\cos 2a\lb\widehat{F}_{kk}+2\widehat{F}_{kl}+\widehat{F}_{ll}\rb\Big)(\zeta_{0})=0;\ 3\leq k<l\leq n.
		\end{aligned}
	\end{align}

	Letting $a\to 0$ in \eqref{one wrt theta kl} - \eqref{four wrt theta kl}, we have,
	\begin{align}
		\label{a=0, NDEq5}&\lb\wh{F}_{0k}+\wh{F}_{0l}+\wh{F}_{1k}+\wh{F}_{1l}\rb(\zeta_0)=0\\
		\label{a=0, NDEq6}&\lb  -2\wh{F}_{01}-2\wh{F}_{11}+\wh{F}_{kk}+2\wh{F}_{kl}+\wh{F}_{ll}\rb(\zeta_{0})=0;\ \ k\geq 3\\
		\label{a=0, NDEq7}&\lb \wh{F}_{0k}+\wh{F}_{0l}+4\wh{F}_{1k}+4\wh{F}_{1l}\rb (\zeta_{0})=0;\ \ k\geq 3.\\
		\label{a=0, NDEq8}&\lb 2\wh{F}_{01}+8\wh{F}_{11}-4(\wh{F}_{kk}+2\widehat{F}_{kl}+\widehat{F}_{ll})\rb(\zeta_{0})=0;\ 3\leq k<l\leq n.
	\end{align}
	
	Now we consider \eqref{a=0, NDEq1} - \eqref{a=0, NDEq4} and \eqref{a=0, NDEq5} - \eqref{a=0, NDEq8} combined with the following two equations: 
	
	\begin{align}%\label{equation after using divergence and trace condition}
		&\label{delta Eq in the higher dimensions} \widehat{F}_{02}(\zeta_{0})=\widehat{F}_{12}(\zeta_{0})=\widehat{F}_{22}(\zeta_{0})=\cdots =\widehat{F}_{n2}(\zeta_{0})=0;\\ 
		&\label{Trace Eq in the higher dimensions}\left(\widehat{F}_{00}+\widehat{F}_{11}+\widehat{F}_{22}+\widehat{F}_{33}+\cdots+\widehat{F}_{nn}\right)(\zeta_{0})=0,
	\end{align}
	since $\delta (F)=\mbox{trace}(F)=0$. 
	
	We now show that these equations imply that $\wh{F}_{ij}(\zeta_0)=0$ for all $0\leq i,j\leq n$.
	
	Adding \eqref{a=0, NDEq8} and \eqref{a=0, NDEq6}, we get, 
	\Beq\label{Eq1}
	2\wh{F}_{11}-(\wh{F}_{kk}+2\wh{F}_{kl}+\wh{F}_{ll})(\zeta_0)=0 \mbox{ for } 3\leq k<l\neq n.
	\Eeq
	Subtracting \eqref{a=0, NDEq7} from \eqref{a=0, NDEq5}, we get,
	\Beq\label{Eq2}
	(\wh{F}_{1k}+\wh{F}_{1l})(\zeta_0)=(\wh{F}_{0k}+\wh{F}_{0l})(\zeta_0)=0 \mbox{ for } 3\leq k<l\neq n.
	\Eeq
	
	Adding \eqref{a=0, NDEq3} and \eqref{a=0, NDEq5a}, we get,
	\Beq\label{Eq3}
	\wh{F}_{11}(\zeta_0)=\wh{F}_{kk}(\zeta_0) \mbox{ for } k\geq 3 \mbox{ and } \wh{F}_{01}(\zeta_0)=0.
	\Eeq
	Now combined with the previous equation, we have from \eqref{a=0, NDEq1} that \Beq\label{Eq4}
	\wh{F}_{00}(\zeta_0)=-\wh{F}_{11}(\zeta_0).
	\Eeq
	
	From \eqref{a=0, NDEq2} and \eqref{a=0, NDEq4}, we have that 
	\Beq\label{Eq5}
	\wh{F}_{1k}(\zeta_0)=\wh{F}_{0k}(\zeta_0)=0 \mbox{ for } k\geq 3.
	\Eeq
	
	Now we already know from \eqref{delta Eq in the higher dimensions} that $\wh{F}_{22}(\zeta_0)=0$. Using \eqref{Eq3} and \eqref{Eq4} in \eqref{Trace Eq in the higher dimensions}, we get that 
	\Beq\label{Eq6}
	(n-2)\wh{F}_{11}(\zeta_0)=0.
	\Eeq
	This then implies that 
	\Beq\label{Eq7}
	\wh{F}_{mm}(\zeta_0)=0 \mbox{ for all } 0\leq m\leq n.
	\Eeq
	Now from \eqref{Eq1}, this then implies that 
	\Beq\label{Eq8}
	\wh{F}_{kl}(\zeta_0)=0 \mbox{ for all } 3\leq k<l\leq n. 
	\Eeq
	Now combined with \eqref{delta Eq in the higher dimensions}, we now have that \Beq\label{Eq9}
	\wh{F}_{ij}(\zeta_0)=0 \mbox{ for all } 0\leq i,j\leq n.
	\Eeq
		Next our goal is to show that if $\zeta$ is any non-zero space-like vector in a small enough conical neighborhood (in the Euclidean sense) of $\zeta_{0}$, then {$\wh{F}_{ij}(\zeta)=0$}, for $0\leq i,j\leq n$ as well. We recall that a non-zero vector $\zeta=(\zeta^{0},\zeta^1,\zeta^2,\cdots,\zeta^n)$ is space-like if $|\zeta^{0}|<\lVert(\zeta^1,\zeta^2,\cdots, \zeta^n)\rVert$, where the norm $\lVert \cdot \rVert$ refers to the Euclidean norm.

	We start with a unit vector in $\Rb^n$, $\zeta':=(\zeta^1,\zeta^2,\cdots,\zeta^n)$, and let us choose $\zeta^0=-\sin \vp$. Then $(-\sin \vp,\zeta^1,\zeta^2,\cdots,\zeta^n)$ is a space-like vector if $-\pi/2<\vp<\pi/2$.

	Let us recall that in showing  $\wh{F}_{ij}(\zeta_{0})=0$, we considered a perturbation $\theta_{0}(a)$ (see  \eqref{Perturbation of theta0 in higher dimensions}) of the vector $\theta_{0}=(1,0,\cdots,0)$. Note that we required that $\theta_{0}(a)$ was close enough to $\theta_{0}$ and  $(1, \theta_{0}(a))\cdot \zeta_{0}=0$. As in the proof for the case $n=3$, the calculations below are motivated by these requirements for the vector $\zeta=(-\sin \vp,\zeta^1,\zeta^2,\cdots,\zeta^n)$.
	
	Since we are interested in a non-zero space-like vector in a small enough conical neighborhood of $\zeta_{0}$, let us choose $\zeta'$ as   
	\[
	\zeta'=(\cos \vp_1 \sin \vp_2, \cos \vp_2, \sin \vp_1 \sin \vp_2\cos \vp_3,\cdots, \sin\vp_{1}\sin\vp_{2}\cdots\sin\vp_{n-2}\sin\vp_{n-1}).
	\]
		Then 
	clearly $\zeta'$ is close to $(0,1,0,\cdots,0)\in\Rb^{n}$ whenever $\vp_{i}$  for $1\leq i\leq n-1$ are close enough to $0$, and choosing $\vp$ close to $0$, we get that the space-like vector $\zeta=(-\sin \vp,\zeta^1,\zeta^2,\cdots,\zeta^n)$ is close enough to $\zeta_{0}=(0,0,1,0,\cdots,0)\in \Rb^{1+n}$.

	Next choose $\theta_{0}(\vp):= \cos\vp e_{1}+\sin\vp e_{2}$ close to $\theta_{0}$ when $\vp$ is close to $0$ and the perturbation  of $\theta_{0}(\vp)$ for $a$ close to $0$ by 
	\begin{align*}
		\begin{aligned}
			&\theta_{k}(\vp,a)=   \cos a \cos \vp e_{1}+ \sin\vp e_{2}+\sin a \cos \vp e_{k} \mbox{ for } k\geq 3,\\
			&\theta_{kl}(\vp,a)= \cos a \cos \vp e_{1}+ \sin\vp e_{2}+\frac{1}{\sqrt{2}}\sin a \cos \vp e_{k}+\frac{1}{\sqrt{2}}\sin a \cos \vp e_{l} \mbox { for } 3\leq k<l\leq n.
		\end{aligned}
	\end{align*}
	Let us consider the orthogonal matrix $A$ such that $A\zeta'=e_{2}$. Let us denote the entries of this matrix by $A=(a_{ij})$.
		Define ${\Theta}_{0}(\vp)$ and  ${\Theta}_{k}(a,\vp)$ and ${\Theta}_{kl}(\vp,a)$ by
	
	\begin{align*}
		\begin{aligned}
			{\Theta}_{k}(\vp,a)=A^{T}\lb \theta_{k}(\vp,a)\rb
			=
			\begin{bmatrix}
				a_{11}\cos a\cos\vp+a_{21}\sin\vp + a_{k1}\sin a\cos\vp\\
				\vdots\\
				a_{1n}\cos a\cos\vp+a_{2n}\sin\vp + a_{kn}\sin a\cos\vp
			\end{bmatrix}
			=
			\begin{bmatrix}
				A_{1}(a)\\\vdots\\A_{n}(a)
			\end{bmatrix}
			\mbox{where $k\geq 3$}
		\end{aligned}
	\end{align*}
	and 
	\begin{align*}
		\begin{aligned}
			{\Theta}_{kl}(\vp,a)=A^{T}\lb \theta_{kl}(\vp,a)\rb=
			\begin{bmatrix}
				a_{11}\cos a\cos\vp+a_{21}\sin\vp + \frac{1}{\sqrt{2}}\lb a_{k1}+a_{l1}\rb \sin a\cos\vp\\
				\vdots\\
				a_{1n}\cos a\cos\vp+a_{2n}\sin\vp + \frac{1}{\sqrt{2}}\lb a_{kn}+a_{ln}\rb \sin a\cos\vp
			\end{bmatrix}
			=
			\begin{bmatrix}
				B_{1}(a)\\\vdots\\B_{n}(a)
			\end{bmatrix},
		\end{aligned}
	\end{align*}
	where $3\leq k<l\leq n$.
	We first note that if $a, \vp,$ and $\vp_{i}$ for $1\leq i\leq n-1$,  are close enough to $0$, then ${\Theta}_{k}(\vp,a)$ and ${\Theta}_{kl}(\vp,a)$ are close enough to $\theta_{0}$. Denoting $\wt{\Theta}_{k}(\vp,a)=(1,\Theta_{k}(\vp,a)$ and $\wt{\Theta}_{k,l}(\vp,a)=(1,\Theta_{k,l}(\vp,a)$, we have that $LF(t,x, \wt{\Theta}_{k}(\vp,a))=0$ for $k\geq 3$ and $LF(t,x, \wt{\Theta}_{kl}(\vp,a))=0$ for $3\leq k<l\leq n$.
	
	Next we show that for all $\vp,a$ and $\vp_{i}$ for $1\leq i\leq n-1$ close enough to $0, (1, \wt{\Theta}_{k}(\vp,a))\cdot \zeta=0$ and $\lb 1,\wt{\Theta}_{kl}(\vp,a)\rb\cdot\zeta=0$. 
	
	To see this, consider
	\begin{align*}
		(-\sin \vp, \zeta')\cdot (1, {\Theta}_{k}(\vp,a))= -\sin\vp + \langle \zeta',{\Theta}_{k}(\vp)\rangle &=-\sin\vp + \langle \zeta',A^{T}\lb  {\theta}_{k}(\vp,a)\rb\rangle \\
		&= -\sin\vp +\langle A\zeta', \theta_{k}(\vp,a)\rangle.
	\end{align*}
	Note that the matrix $A$ is chosen such that  $A\lb \zeta'\rb=(0,1,0,\cdots,0)$.
	Since $\theta_{k}(\vp,a)=\cos a \cos \vp e_{1}+\sin \vp e_{2}+\sin a \cos \vp e_{k}$, $k\geq 3$, we now get that $(1,{\Theta}_{k}(\vp,a))\cdot \zeta=0$. Similarly we can check that $\lb 1,\Theta_{kl}(\vp,a)\rb \cdot\zeta=0$. 
	
	Using this choice of $\wt{\Theta}_{k}(\vp,a)$ in \eqref{Vanishing of Fourier Coefficients}, we have
	
	\begin{align}\label{equation after using pm theta(a) in general case in higher dimensions}
		\begin{aligned}
			&\Bigg(\widehat{F}_{00}+2A_{1}(a)\widehat{F}_{01}+2A_{2}(a)\widehat{F}_{02}+ 2A_{3}(a)\widehat{F}_{03}+ \cdots+2A_{n}(a)\widehat{F}_{0n}\\
			&+{\left(A_{1}(a)\right)}^{2}\widehat{F}_{11}+2A_{1}(a)A_{2}(a)\widehat{F}_{12}+ 2A_{1}(a)A_{3}(a)\widehat{F}_{13}+\cdots+2A_{1}(a)A_{n}(a)\widehat{F}_{1n}\\
			&+{\lb A_{2}(a)\rb^{2}}\widehat{F}_{22}+ 2A_{2}(a)A_{3}(a)\widehat{F}_{23}+2A_{2}(a)A_{4}(a)\widehat{F}_{24}+\cdots+2A_{2}(a)A_{n}(a)\widehat{F}_{2n}\\
			& \hspace{2.5in}\vdots\\
			&+\lb A_{n-1}(a)\rb^{2}\widehat{F}_{n-1,n-1}+ 2A_{n-1}(a)A_{n}(a)\widehat{F}_{n-1,n}+\left(A_{n}(a)\right)^{2}\widehat{F}_{nn}\Bigg)(\zeta)=0.
		\end{aligned}
	\end{align}
	
	Next using the choice $\wt{\Theta}_{kl}(\vp,a)$ in  \eqref{Vanishing of Fourier Coefficients}, we get
	\begin{align}\label{equation after using pm thetakl(a) in general case in higher dimensions}
		\begin{aligned}
			&\Bigg(\widehat{F}_{00}+2B_{1}(a)\widehat{F}_{01}+2B_{2}(a)\widehat{F}_{02}+2B_{3}(a)\widehat{F}_{03}+\cdots+2B_{n}(a)\widehat{F}_{0n}\\
			&+{\left(B_{1}(a)\right)}^{2}\widehat{F}_{11}+2B_{1}(a)B_{2}(a)\widehat{F}_{12}+2B_{1}(a)B_{3}(a)\widehat{F}_{13}+\cdots+2B_{1}(a)B_{n}(a)\widehat{F}_{1n}\\
			&+{\lb B_{2}(a)\rb^{2}}\widehat{F}_{22}+2B_{2}(a)B_{3}(a)\widehat{F}_{23}+2B_{2}(a)B_{4}(a)\widehat{F}_{24}+\cdots+2B_{2}(a)B_{n}(a)\widehat{F}_{2n}\\
			&\hspace{2.5in}\vdots\\
			&+\lb B_{n-1}(a)\rb^{2}\widehat{F}_{n-1,n-1}+2B_{n-1}(a)B_{n}(a)\widehat{F}_{n-1,n}+\left(B_{n}(a)\right)^{2}\widehat{F}_{nn}\Bigg)(\zeta)=0.
		\end{aligned}
	\end{align}
	We differentiate each of Equations \eqref{equation after using pm theta(a) in general case in higher dimensions} and \eqref{equation after using pm thetakl(a) in general case in higher dimensions}, 4 times and let $a\to 0$. Arguing similarly to the case of $n=3$, we will arrive at the fact that $\wh{F}_{ij}(\zeta)=0$, and also $\wh{F}_{ij}(\lambda \zeta)=0$ for $\lambda >0$.
	
\end{proof}
\begin{proof}[Proof of Theorem \ref{Main Theorem for LRT of 2-tensor fields in R3}] By Lemma \ref{Main Lemma}, we have an open cone of space-like vectors $\zeta$ along which the Fourier transform $\wh{F}_{ij}(\zeta)=0$. Since $F_{ij}$ for $1\leq i,j\leq n$ are extended by zero outside $\Omega$, therefore using  Paley-Wiener theorem, we have that $F_{ij}\equiv 0$ for all $0\leq i,j\leq n$.
\end{proof}

Next we prove the decomposition result stated in Theorem \ref{DT}. 
\begin{proof}[Proof of Theorem \ref{DT}]
	Assume the decomposition is true. Taking trace on both sides in \eqref{Decomposition formula}, we get,
	\[
	\mbox{trace}(F)=\mbox{trace}(\wt{F})+ \mbox{trace}(\lambda g) + \mbox{trace}(\D v).
	\]
	Now by assumption, $\mbox{trace}(\wt{F})=0$ and $\mbox{trace}(\lambda g) = (n-1)\lambda$ . Also $\mbox{trace}(\D v)= \delta v$. Therefore
	\Beq\label{Trace equation}
	\mbox{trace}(F)=(n-1)\lambda + \delta v.
	\Eeq
	Let us take divergence on both sides of \eqref{Decomposition formula}. Using the fact that $\wt{F}$ is divergence free
	\[
	\delta F= \delta \lb \lambda g\rb + \delta \D v.
	\]
	
	Writing the above  equation in expanded form, we have 
	\begin{align}\label{Equation after taking divergence}
		\begin{aligned}
			\begin{bmatrix}\vspace{2mm}
				&\PD_{j}F_{0j}\\\vspace{2mm}
				&\PD_{j}F_{1j}\\\vspace{2mm}
				&\vdots\\\vspace{2mm}
				&\PD_{j}F_{nj}
			\end{bmatrix}
			=
			\begin{bmatrix}\vspace{2mm}
				-\PD_{0}\lambda\\\vspace{2mm}
				\PD_{1}\lambda\\\vspace{2mm}
				\vdots\\\vspace{2mm}
				\PD_{n}\lambda
			\end{bmatrix}
			+\frac{1}{2}
			\begin{bmatrix}\vspace{2mm}
				&\Delta v_{0}+\PD_{0j}^{2}v_{j}\\\vspace{2mm}
				&\Delta v_{1}+\PD_{1j}^{2}v_{j}\\\vspace{2mm}
				&\vdots\\\vspace{2mm}
				&\Delta v_{n}+\PD_{nj}^{2}v_{j}
			\end{bmatrix}
			.
		\end{aligned}
	\end{align}
	
	Now using the expression for $\lambda$ from \eqref{Trace equation} in \eqref{Equation after taking divergence}, we get 
	\begin{align*}
		\begin{aligned}
			\frac{1}{2}
			\begin{bmatrix}\vspace{2mm}
				&\Delta v_{0}+\PD_{0j}^{2}v_{j}\\\vspace{2mm}
				&\Delta v_{1}+\PD_{1j}^{2}v_{j}\\\vspace{2mm}
				&\vdots\\\vspace{2mm}
				&\Delta v_{n}+\PD_{nj}^{2}v_{j}
			\end{bmatrix}
			+\frac{1}{n-1}
			\begin{bmatrix}\vspace{2mm}
				&-\PD_{0} \mbox{trace}(F)\\\vspace{2mm}
				&\PD_{1} \mbox{trace}(F)\\\vspace{2mm}
				&\vdots\\\vspace{2mm}
				&\PD_{n} \mbox{trace}(F)
			\end{bmatrix}
			-\frac{1}{n-1}
			\begin{bmatrix}\vspace{2mm}
				&-\PD^{2}_{0j}v_{j}\\\vspace{2mm}
				&\PD^{2}_{1j}v_{j}\\\vspace{2mm}
				&\vdots\\\vspace{2mm}
				&\PD^{2}_{nj}v_{j}
			\end{bmatrix}
			=
			\begin{bmatrix}\vspace{2mm}
				&\PD_{j}F_{0j}\\\vspace{2mm}
				&\PD_{j}F_{1j}\\\vspace{2mm}
				&\vdots\\\vspace{2mm}
				&\PD_{j}F_{nj}
			\end{bmatrix}
			.
		\end{aligned}
	\end{align*}
	Thus the equation for $v$ is 
	\begin{align}\label{Equation for v general case}
		\begin{aligned}
			\begin{bmatrix}\vspace{2mm}
				&\Delta v_{0}+\lb 1+\frac{2}{n-1}\rb \PD_{0j}^{2}v_{j}\\\vspace{2mm}
				&\Delta v_{1}+\lb 1-\frac{2}{n-1}\rb \PD_{1j}^{2}v_{j}\\\vspace{2mm}
				&\vdots\\\vspace{2mm}
				&\Delta v_{n}+\lb 1-\frac{2}{n-1}\rb \PD_{nj}^{2}v_{j}
			\end{bmatrix}
			=2
			\begin{bmatrix}\vspace{2mm}
				&\PD_{j}F_{0j}\\\vspace{2mm}
				&\PD_{j}F_{1j}\\\vspace{2mm}
				&\vdots\\\vspace{2mm}
				&\PD_{j}F_{nj}
			\end{bmatrix}
			-\frac{2}{n-1}
			\begin{bmatrix}\vspace{2mm}
				&-\PD_{0} \mbox{trace}(F)\\\vspace{2mm}
				&\PD_{1} \mbox{trace}(F)\\\vspace{2mm}
				&\vdots\\\vspace{2mm}
				&\PD_{n} \mbox{trace}(F)
			\end{bmatrix}
			:=
			\begin{bmatrix}\vspace{2mm}
				u_{0}\\\vspace{2mm}
				u_{1}\\\vspace{2mm}
				\vdots\\\vspace{2mm}
				u_{n}
			\end{bmatrix}
			.
		\end{aligned}
	\end{align}
	This set of equations can be written as
	\begin{align}\label{Equation for vj}
		\begin{aligned}
			&\Delta v_{0}+\lb 1+\frac{2}{n-1}\rb \PD_{0k}^{2}v_{k}=u_{0}\\
			&\Delta v_{j}+\lb 1-\frac{2}{n-1}\rb \PD_{jk}^{2}v_{k}=u_{j}, \ \mbox{for $1\leq j\leq n$}.
		\end{aligned}
	\end{align}
	We first note that for $n=3$, the above system of equations becomes
	
	\begin{align}\label{Equation for vj n equal 3}
		\begin{aligned}
			\begin{cases}
				3\PD_{0}^{2}v_{0}+\PD_{1}^{2}v_{0}+\PD_{2}^{2}v_{0}+\PD_{3}^{2}v_{0}+2\lb \PD_{01}^{2}v_{1}+\PD_{02}^{2}v_{2}+\PD_{03}^{2}v_{3}\rb =u_{0},\ \mbox{in $\Omega$}\\
				\Delta v_{1}=u_{1}, \mbox{in $\Omega$}\\
				\Delta v_{2}=u_{2}, \ \mbox{in $\Omega$}\\
				\Delta v_{3}=u_{3}, \ \mbox{in $\Omega$}\\
				v_{0}=v_{1}=v_{2}=v_{3}=0,\ \mbox{on  $\PD\Omega$}.
			\end{cases}
		\end{aligned}
	\end{align}
	Equation \eqref{Equation for vj n equal 3}  is a decoupled system of equations for $v$ with zero Dirichlet boundary data and hence it is uniquely solvable. Then we use \eqref{Trace equation} to solve for $\lambda$. This completes the proof of Theorem \ref{DT} for $n=3$.
	
	Now in what follows, we assume that $n\geq 4$.

	For simplicity, we denote $\alpha=1+\frac{2}{n-1}$, $\beta= 1-\frac{2}{n-1}$ and  $A({t},x;{\n})$ the following operator (here and below $\n=(\PD_t,\PD_{x_{1}},\cdots,\PD_{x_{n}}$)): 
		\begin{align}\label{Operator A}
			A({t}, x;{\n})=
			\begin{bmatrix}
				&\Delta+\alpha\PD_{0}^{2} & \alpha\PD_{01}^{2}& \alpha\PD_{02}^{{2}}&\cdots&\alpha\PD_{0n}^{2}\\
				&\beta\PD_{10}^{2}&\Delta+\beta\PD_{1}^{2}&\beta\PD_{12}^{2}&\cdots&\beta\PD_{1n}^{2}\\
				&\beta\PD_{20}^{2}&\beta\PD_{21}^{2}&\Delta+\beta\PD_{2}^{2}&\cdots&\beta\PD_{2n}^{2}\\
				&\vdots&\vdots&\vdots&\ddots&\vdots\\
				&\beta\PD_{n0}^{2}&\beta\PD_{n1}^{2}&\beta\PD_{n2}^{2}&\cdots&\Delta+\beta\PD_{n}^{2}
			\end{bmatrix}.
				\end{align}
	Then we have \eqref{Equation for vj} with the homogeneous boundary condition can be written as
	\begin{align}\label{BVP in compact form}
		\begin{aligned}
			\begin{cases}
				A({t},x;{\n})v({t},x)=u({t},x) \quad ({t},x)\in\Omega,\\
				v({t},x)=0 \quad {(t,x)}\in\PD\Omega
			\end{cases}
		\end{aligned}
	\end{align}
	where $v({t}, x)=\lb v_{0}({t},x),v_{1}({t},x),\cdots,v_{n}({t},x)\rb^{T}$ and $u({t},x)=\lb u_{0}({t},x),u_{1}({t},x),\cdots,u_{n}({t},x)\rb^{T}$ are two column vectors. Our goal is to show that the  boundary value problem \eqref{BVP in compact form} is uniquely solvable.
	To this end, we show (see \cite{Sharaf_Book,Taylor_Book}) that 
	$A({t},x;{\n})$  is  strongly elliptic  with zero kernel and zero co-kernel.

	We first prove strong ellipticity. The symbol $A({t,x};\xi)$ of operator $A({t}, x;{\n})$ is  (up to a sign) given by 
	\begin{align}\label{Symbol of A}
		\begin{aligned}
			A({t}, x;\xi)=
			\begin{bmatrix}
				&\lvert\xi\rvert^{2}+\alpha\xi_{0}^{2} & \alpha\xi_{0}\xi_{1}& \alpha\xi_{0}\xi_{2}&\cdots&\alpha\xi_{0}\xi_{n}\\
				&\beta\xi_{1}\xi_{0}&\lvert\xi\rvert^{2}+\beta\xi_{1}^{2}&\beta\xi_{1}\xi_{2}&\cdots&\beta\xi_{1}\xi_{n}\\
				&\beta\xi_{2}\xi_{0}&\beta\xi_{2}\xi_{1}&\lvert\xi\rvert^{2}+\beta\xi_{2}^{2}&\cdots&\beta\xi_{2}\xi_{n}\\
				&\vdots&\vdots&\vdots&\ddots&\vdots\\
				&\beta\xi_{n}\xi_{0}&\beta\xi_{n}\xi_{1}&\beta\xi_{n}\xi_{2}&\cdots&\lvert\xi\rvert^{2}+\beta\xi_{n}^{2},
			\end{bmatrix}
					\end{aligned}
	\end{align}
	where $\xi=(\xi_0,\xi_1,\cdots,\xi_n)$. To prove strong ellipticity for $A({t},x;{\n})$ it is enough to show that  $$P({t}, x;\xi) =\frac{A({t}, x;\xi)+A^{T}({t}, x;\xi)}{2}$$ 
	is positive definite. Now
		\begin{align*}
			P({t}, x;\xi)=
			\begin{bmatrix}
				&\lvert\xi\rvert^{2}+\alpha\xi_{0}^{2} & \xi_{0}\xi_{1}& \xi_{0}\xi_{2}&\cdots&\xi_{0}\xi_{n}\\
				&\xi_{1}\xi_{0}&\lvert\xi\rvert^{2}+\beta\xi_{1}^{2}&\beta\xi_{1}\xi_{2}&\cdots&\beta\xi_{1}\xi_{n}\\
				&\xi_{2}\xi_{0}&\beta\xi_{2}\xi_{1}&\lvert\xi\rvert^{2}+\beta\xi_{2}^{2}&\cdots&\beta\xi_{2}\xi_{n}\\
				&\vdots&\vdots&\vdots&\ddots&\vdots\\
				&\xi_{n}\xi_{0}&\beta\xi_{n}\xi_{1}&\beta\xi_{n}\xi_{2}&\cdots&\lvert\xi\rvert^{2}+\beta\xi_{n}^{2}
			\end{bmatrix}
			.
	\end{align*}
	Let $\eta\in\Rb^{1+n}\setminus{\{0\}}$. Then $\eta^{T}P({t}, x;\xi)\eta$  is given by 
	
	\begin{align*}
		\begin{aligned}
			\eta^{T}P({t}, x;\xi)\eta&=\lvert\xi\rvert^{2}\lvert\eta\rvert^{2}+\lb\alpha-1\rb \xi_{0}^{2}\eta_{0}^{2}+\xi_{0}\eta_{0}\lb \xi\cdot\eta\rb+(1-\beta)\xi_{0}\eta_{0}\lb \xi\cdot\eta-\xi_{0}\eta_{0}\rb+\beta\xi\cdot\eta\lb \xi\cdot\eta-\xi_{0}\eta_{0}\rb\\
			&=\lvert\xi\rvert^{2}\lvert\eta\rvert^{2}+\lb \alpha+\beta-2\rb \xi_{0}^{2}\eta_{0}^{2}+\beta\lb \xi\cdot\eta\rb^{2}+2\lb 1-\beta\rb \lb\xi\cdot\eta\rb\xi_{0}\eta_{0}.\\
		\end{aligned}
	\end{align*} 
	Now using the value of $\alpha$ and $\beta$, we have 
	\begin{align*}
		\begin{aligned}
			\eta^{T}P(t,x;\xi)\eta&=\lvert\xi\rvert^{2}\lvert\eta\rvert^{2}+\frac{n-3}{n-1}\lb \xi\cdot\eta\rb^{2}+\frac{4}{n-1}\lb \xi_{0}\eta_{0}\rb\lb\xi\cdot\eta\rb\\
			&=\frac{\lvert\xi\rvert^{2}\lvert\eta\rvert^{2}}{n-1}\lb n-1+(n-3)\lb \frac{\xi\cdot\eta}{\lvert\xi\rvert\lvert\eta\rvert}\rb^{2}+4\lb\frac{\xi_{0}\eta_{0}}{\lvert\xi\rvert\lvert\eta\rvert}\rb\lb \frac{\xi\cdot\eta}{\lvert\xi\rvert\lvert\eta\rvert}\rb\rb.
		\end{aligned}
	\end{align*}
	Let us write the vectors $\xi$ and $\eta$ as $\xi=(\xi_{0},\xi')$ and $\eta=(\eta_{0},\eta')$. Now, for simplicity, we define $A=\frac{\xi_{0}\eta_{0}}{\lvert\xi\rvert\lvert\eta\rvert}$ and $B=\frac{\xi'\cdot\eta'}{\lvert\xi\rvert\lvert\eta\rvert}$, then clearly $\lvert A\rvert\leq 1$, $\lvert B\rvert \leq 1$ and $\lvert A+B\rvert\leq 1$. Using these in the above equation, we have 
	\begin{align*}
		\eta^{T}P({t}, x;\xi)\eta&=\frac{\lvert\xi\rvert^{2}\lvert\eta\rvert^{2}}{n-1}\lb n-1+(n-3)\lb A+B\rb^{2}+4A\lb A+B\rb\rb\\
		&=\frac{\lvert\xi\rvert^{2}\lvert\eta\rvert^{2}}{n-1}\lb n-1+\lb n+1\rb A^{2}+2(n-1)AB+(n-3)B^{2}\rb\\
		&\geq\frac{\lvert\xi\rvert^{2}\lvert\eta\rvert^{2}}{n-1}\lb n-1+(n+1)A^{2}-(n-1)A^{2}-(n-1)B^{2}+(n-3)B^{2}\rb\\
		&\geq \frac{\lvert\xi\rvert^{2}\lvert\eta\rvert^{2}}{n-1}\lb n-1+2A^{2}-2B^{2}\rb\geq \frac{n-3}{n-1}\lvert\xi\rvert^{2}\lvert\eta\rvert^{2}.
	\end{align*}
	This proves that $P({t}, x,\xi)$ is positive definite and hence $A({t}, x;{\n})$ is strongly elliptic for $n\geq 4$.\\
	
	Next we show that \eqref{BVP in compact form} with $u=0$ on the right hand side has only the zero solution. Multiplying the first equation in \eqref{Equation for vj}  by $v_{0}$ and second equation in \eqref{Equation for vj} by $v_{j}$ and integrating over $\Omega$, we get the following set of equations 
	\begin{align}\label{Equation for v0 after integration}
		\int\limits_{\Omega}\lvert \n v_{0}({t}, x)\rvert^{2}\, \D {t}\D x+\lb 1+\frac{2}{n-1}\rb \int\limits_{\Omega}\nabla\cdot v({t}, x)\PD_{0}v_{0}({t},x)\, \D {t}\D x=0
	\end{align}
	and for $1\leq j\leq n$
	\begin{align}\label{Equation for vj after integration}
		\int\limits_{\Omega}\lvert \n v_{j}(t,x)\rvert^{2}\, \D {t}\D x+\lb 1-\frac{2}{n-1}\rb \int\limits_{\Omega}\nabla\cdot v({t},x)\PD_{j}v_{j}({t}, x)\, \D {t}\D x=0. 
	\end{align}
	Adding the set of equations in \eqref{Equation for v0 after integration} and \eqref{Equation for vj after integration}, we get 
	\begin{align}\label{Quadratic in div v}
		\begin{aligned}
			\int\limits_{\Omega}\sum_{j=0}^{n}\lvert \nabla v_{j}({t},x)\rvert^{2}\, \D {t}\D x+\lb 1-\frac{2}{n-1}\rb \int\limits_{\Omega}\lvert \nabla\cdot v({t}, x)\rvert^{2}\, \D {t}\D x+\frac{4}{n-1}\int\limits_{\Omega}\nabla\cdot v({t}, x)\PD_{0} v_{0}({t}, x)\, \D {t}\D x=0. 
		\end{aligned}
	\end{align}
		For simplicity, let us denote $a=\PD_{0}v_{0}$, $b=\sum_{j=1}^{n}\PD_{j}v_{j}$ and $c= \sum_{j=0}^{n}\lvert\nabla v_{j}\rvert^{2}-\lvert\PD_{0}v_{0}\rvert^{2}$. Using these in \eqref{Quadratic in div v}, we have 
	\begin{align*}
		\begin{aligned}
			\int\limits_{\Omega} \lb c+a^{2}+\frac{n-3}{n-1}\lb a+b\rb^{2}+\frac{4}{n-1}\lb a^{2}+ab\rb\rb \D {t} \D x=0.
		\end{aligned}
	\end{align*}
	Rewriting this, we get,  
	\begin{align*}
		\begin{aligned}
			\int\limits_{\Omega} \lb 2n a^{2}+2\lb n-1\rb ab+\lb n-3\rb b^{2}+\lb n-1\rb c\rb \D {t}\D x=0.
		\end{aligned}
	\end{align*}
	Now let us view the integrand in the above equation as a quadratic equation in $a$ and its discriminant $D({t},x)$ is given by 
	\begin{equation*}
		\begin{aligned}
			D({t},x)&=4(n-1)^{2}b^{2}- 8n \lb (n-3)b^{2}+(n-1)c\rb\\
			&=4\Big{(}\lb n^{2}-2n+1\rb b^{2}-2n(n-3)b^{2}-2n(n-1)c\Big{)}\\
			&= 4\Big{(}\lb -n^{2}+4n+1\rb b^{2}-2n(n-1)c\Big{)}.
		\end{aligned}
	\end{equation*}
{ Now 
\begin{align*}
c=\sum_{j=0}^{n}\lvert \nabla v_{j}\rvert^{2}-\lvert\PD_{0}v_{0}\rvert^{2}
=\sum_{i,j=0}^{n}|\PD_{i} v_{j}|^{2}-\lvert\PD_{0}v_{0}\rvert^{2}
\geq \sum_{j=1}^{n}\lvert \PD_{j}v_{j}\rvert^{2}.
\end{align*}
Also 
\[b^{2}=\big\lvert\sum_{j=1}^{n}\PD_{j}v_{j}\big\rvert^{2}=\sum_{j=1}^{n}\lvert\PD_{j}v_{j}\rvert^{2}+2\sum_{1\leq j<k\leq n}\mbox{Re}\lb \PD_{j}v_{j}\overline{\PD_{k}v_{k}}\rb \leq n\sum_{j=1}^{n}\lvert\PD_{j}v_{j}\rvert^{2}\leq nc. \]
}
	Thus we have that $nc\geq b^{2}$ and using this we get 
	\begin{align*}
		D({t},x)\leq 4\lb -n^{2}+4n+1 -2n+2\rb b^{2}=4\lb -n^{2}+2n +3\rb b^{2}<0 \mbox{ if } b^{2}\neq 0 \mbox{ and } n\geq 4.
	\end{align*}
	However if $D({t},x)<0$, we have the integrand in \eqref{Quadratic in div v} is strictly positive which is not possible since the integral in \eqref{Quadratic in div v} is zero. Hence we have $b=0$ and using this in \eqref{Quadratic in div v}, we have $\sum_{j=0}^{n}\lvert\nabla v_{j}\rvert^{2}=0$ in $\Omega$. This implies $v_{j}({t},x)=c_{j}$ for $0\leq j\leq n$ where $c_{j}$ is some constant. Now using the boundary condition we have that $v_{j}({t},x)=0$ in $\Omega$. Hence Ker$\lb A({t},x;{\n})\rb ={\{0\}}$.\\

	Finally, we show that the co-kernel of $A({t},x;{\n})$ is $0$ as well. 
	We proceed as follows.  Let $w\in \lb \mbox{Image}(A({t},x;{\n}))\rb^{\perp}$. That is, consider $w$ such that  
	\Beq\label{cokernel equation}
	\langle w,A({t},x;{\n})v\rangle=0 \mbox{ for all } v\in C^{\infty}(\Omega) \mbox{ with } v=0 \mbox{ on } \PD \Omega.
	\Eeq
	This, in particular, gives  \Beq\label{cokernel equation 1}
	\langle A^{*}({t},x;{\n})w,v\rangle=0 \mbox{ for all } v\in C^{\infty}_{c}(\Omega),
	\Eeq
	where 
	\begin{align}\label{Adjoint to A}
		\begin{aligned}
			A^{*}({t},x;\PD)=
			\begin{bmatrix}
				&\Delta+\alpha\PD_{0}^{2}&\beta\PD_{10}^{2}&\beta\PD_{20}^{2}&\cdots&\beta\PD_{n0}^{2}\\
				&\alpha\PD_{01}^{2}&\Delta+\beta\PD_{1}^{2}&\beta\PD_{21}^{2}&\cdots&\beta\PD_{n1}^{2}\\
				&\alpha\PD_{02}^{2}&\beta\PD_{12}^{2}&\Delta+\beta\PD_{2}^{2}&\cdots&\beta\PD_{n2}^{2}\\
				&\vdots&\vdots&\vdots&\ddots&\vdots\\
				&\alpha\PD_{0n}^{2}&\beta\PD_{1n}^{2}&\beta\PD_{2n}^{2}&\cdots&\Delta+\beta\PD_{n}^{2}
			\end{bmatrix}
			.
		\end{aligned}
	\end{align}	
	By integration by parts in \eqref{cokernel equation},  combined with \eqref{Operator A} and the fact that $v|_{\PD\Omega}=0$, we have 
	\begin{align}\label{identity for showing vanishing of w on boundary}
		\begin{aligned} 
			0=\langle w,A({t},x,{\n})v\rangle_{L^{2}(\Omega)} =\langle A^{*}({t},x,{\n})w,v\rangle_{L^{2}(\Omega)}+\langle w, B({t},x,\PD_{\nu})v\rangle_{L^{2}(\PD\Omega)}
		\end{aligned}
	\end{align}
	where $B({t},x,\PD_{\nu})$ is the boundary operator we arrive at after integration by parts. {The first term on the right hand side of \eqref{identity for showing vanishing of w on boundary} is $0$ by \eqref{cokernel equation 1}. Next we show that $w=0$ on $\PD \Omega$. Let $u$ be an arbitrary vector field on $\PD \Omega$ with $C^{\infty}(\PD\Omega)$ coefficients. We show that there exists a vector field $v$ in $\Omega$ with $C^{\infty}$ coefficients such that} 
	\Beq\label{Boundary ODE}
			B({t},x,\PD_{\nu})v=u, \ \mbox{on}\ \PD\Omega,\ \mbox{and}\  v|_{\PD\Omega}=0. 
		\Eeq
	The boundary operator $B$ has a smooth extension to a small enough neighbourhood of the boundary. With this extension, we can consider \eqref{Boundary ODE} as an initial value problem for a system of first order ODEs with smooth coefficients, the solution of which exists in a small enough neighborhood of the boundary. This solution can now be extended smoothly to all of $\Omega$ which we denote by $v$.
	Using this in \eqref{identity for showing vanishing of w on boundary}, we get that $w|_{\PD\Omega}=0$.
	Thus, finally to show that the co-kernel of $A({t},x;\PD)$ is $0$,  we have to show that the following BVP 
	\begin{align}\label{Equation for A*}
		\begin{aligned}
			\begin{cases}
				A^{*}({t},x;\PD)w=0 \mbox{ for }  ({t},x)\in\Omega\\
				w({t},x)=0 \mbox{ for } {(t,x)}\in\PD\Omega
			\end{cases}
		\end{aligned}
	\end{align}
	has only the zero solution where $A^{*}({t},x;\PD)$ is the adjoint for operator $A({t},x;\PD)$.  Using the expression for $A^{*}({t},x;\PD)$ from \eqref{Adjoint to A} in \eqref{Equation for A*}, we have the following set of equations for $w_{j}$ for $0\leq j\leq n$ with zero Dirichlet boundary condition.
	\begin{align}\label{Equation for w}
		\begin{aligned}
			&\Delta w_{0}+(\alpha-\beta)\PD_{0}^{2}w_{0}+\beta\PD_{k0}^{2}w_{k}=0\\
			&\Delta w_{j}+(\alpha-\beta)\PD_{0j}w_{0}+\beta\PD_{kj}^{2}w_{k}=0,\ 1\leq j\leq n.
		\end{aligned}
	\end{align}
	Now multiplying the first equation in \eqref{Equation for w} by $w_{0}$ and second equation by $w_{j}$ and integrating over $\Omega$, we have 
	\begin{align*}
		\begin{aligned}
			&\int\limits_{\Omega}\lvert\nabla w_{0}({t},x)\rvert^{2}\D {t}\D x+(\alpha-\beta)\int\limits\PD_{0}w_{0}({t},x)\PD_{0}w_{0}({t},x)\D {t}\D x+\beta\sum_{k=0}^{n}\int\limits_{\Omega}\PD_{0}w_{0}({t},x)\PD_{k}w_{k}({t},x)\D {t}\D x=0;\\
			&\int\limits_{\Omega}\lvert\nabla w_{j}({t},x)\rvert^{2}\D {t}\D x+(\alpha-\beta)\int\limits_{\Omega}\PD_{0}w_{0}({t},x)\PD_{j}w_{j}({t},x)\D {t}\D x+\beta\sum_{k=0}^{n}\PD_{j}w_{j}({t},x)\PD_{k}w_{k}({t},x)\D {t}\D x;\ 1\leq j\leq n.
		\end{aligned}
	\end{align*}
	Adding the above set of equations and substituting the expressions for $\alpha$ and $\beta$, we get 
	\begin{align*}
		\begin{aligned}
			\sum_{j=0}^{n}\int\limits_{\Omega}\lvert\nabla w_{j}({t},x)\rvert^{2}\D {t}\D x+\frac{4}{n-1}\int\limits_{\Omega}\nabla\cdot w({t},x)\PD_{0}w_{0}({t},x)\D {t}\D x+\lb 1-\frac{2}{n-1}\rb\int\limits_{\Omega}\lvert\nabla\cdot w({t},x)\rvert^{2}\D {t}\D x=0.
		\end{aligned}
	\end{align*}
	This equation is exactly the same as that of \eqref{Quadratic in div v}.  Hence repeating the same arguments as before, we conclude that 
	$w({t},x)=0$. Thus we have co-kernel$(A)=\{0\}$ for $n\geq 4$. This completes the proof of the decomposition theorem for $n\geq 4$. 
\end{proof}
\begin{proof}[Proof of Theorem \ref{Main Theorem 2 for LRT of 2-tensor fields in R3}]
	Now combining the results of Theorems \ref{Main Theorem for LRT of 2-tensor fields in R3} and \ref{DT}, we conclude Theorem \ref{Main Theorem 2 for LRT of 2-tensor fields in R3}. For, given $F\in C^{\infty}(\overline{\Omega},S^{2}\Rb^{1+n})$, by Theorem \ref{Main Theorem 2 for LRT of 2-tensor fields in R3}, we can decompose $F=\wt{F}+\lambda g+ \D v$, with $\wt{F}, \lambda, v\in C^{\infty}(\overline{\Omega})$ satisfying $\delta(\wt{F})=\mbox{trace}(\wt{F})=0$ and $v|_{\PD \Omega}=0$, and $g$ is the Minkowski metric. It is straightforward to see that $\lambda g$ and $\D v$ above are in the kernel of the light ray transform; see \cite{LOSU} as well. The fact that $\D v$ with $v|_{\PD \Omega}=0$ lies in the kernel of the light ray transform follows by fundamental theorem of calculus and $\lambda g$ lies in the kernel because $g$ has signature $(-1,1,\cdots,1)$, and light ray transform integrates $F$ along lines in the direction $\wt{\theta}=(1,\theta)$ with $|\theta|=1$. Therefore, we conclude that $LF(t,x,\wt{\theta})=L\wt{F}(t,x,\wt{\theta})=0.$  Finally, to conclude, we apply Theorem \ref{Main Theorem for LRT of 2-tensor fields in R3} for $\wt{F}$, after extending $\wt{F}=0$ outside $\overline{\Omega}$.
	
\end{proof}

\section*{Acknowledgements}
\noindent Krishnan is supported in part by US NSF grant DMS 1616564 and India SERB Matrics grant MTR/2017/000837. The work of Vashisth is supported by NSAF grant  U1930402. The authors thank Plamen Stefanov for his comments and encouragement, and Vladimir Sharafutdinov for his interest in the problem. Finally, we thank the anonymous referees for the very useful comments that helped us immensely in improving the paper.

\end{document}